\def\N{\mathbb{N}}
\def\R{\mathbb{R}}
\def\C{C^{\infty}(M)}
\newtheorem{definition}{Definition}[section]
\newtheorem{proposition}[definition]{Proposition}
\newtheorem{theorem}[definition]{Theorem}
\newtheorem{remark}[definition]{Remark}
\newtheorem{corol}[definition]{Corollary}
\newenvironment{proof}{\noindent{\bf Proof.}}{\hfill $\blacklozenge$}
\newenvironment{sk-proof}{\noindent{\bf Sketch of the proof.}}{\hfill $\blacklozenge$}
\begin{document}

\title{Poisson brackets with prescribed family of functions in involution}

\author{Fani Petalidou
\\
\\
\emph{Department of Mathematics}\\
\emph{Aristotle University of Thessaloniki} \\
\emph{54124 Thessaloniki, Greece} \\
\\
\noindent\emph{E-mail: petalido@math.auth.gr}}

\date{}
\maketitle

\vskip 1 cm

\begin{abstract}
It is well known that functions in involution with respect to Poisson brackets have a privileged role in the theory of completely integrable systems. Finding functionally independent functions in involution with a given function $h$ on a Poisson manifold is a fundamental problem of this theory and is very useful for the explicit integration of the equations of motion defined by $h$. In this paper, we present our results on the study of the inverse, so to speak, problem. By developing a technique analogous to that presented in \cite{dam-pet} for the establishment of Poisson brackets with prescribed Casimir invariants, we construct an algorithm which yields Poisson brackets having a given family of functions in involution. Our approach allows us to deal with bi-Hamiltonian structures constructively and therefore allows us to also deal with the completely integrable systems that arise in such a framework.
\end{abstract}

\vspace{5mm} \noindent {\bf{Keywords: }}{Poisson bracket, Casimir function, functions in involution, Poisson pencil, bi-Hamiltonian hierarchy, Casimir of a Poisson pencil.}

\vspace{3mm} \noindent {\bf{MSC (2010):}} {\textbf{53D17, 37K10}}.

\section{Introduction}
A \emph{Poisson bracket} $\{\cdot, \cdot\}$ on the space $\C$ of smooth functions on a smooth manifold $M$ is an internal bilinear and skew-symmetric composition law that verifies Jacobi's identity and Leibniz's rule. On the space of smooth functions on the phase space of a mechanical system, this concept was introduced by S. D. Poisson \cite{poi} in 1809 in order to study the problem of variation of constants in the framework of analytical mechanics. Later, it was used by Jacobi in his study on the problem of integration of partial differential equations \cite{jac}, but, the study of its geometry began by the works of S. Lie \cite{lie} who generalized this notion to manifolds of arbitrary dimension. The increased interest in this subject during the 19th century was originally motivated by the important role of Poisson brackets in Hamiltonian dynamics. After a long period of inactivity, it has been revived in the last 40 years due to the publications of A. Lichnerowicz \cite{lch1}, A. Kirillov \cite{kir} and A. Weinstein \cite{wei}. Now, \emph{Poisson geometry}, as most people call the branch of Differential Geometry that studies Poisson manifolds $(M, \{\cdot,\cdot\})$, i.e., smooth manifolds $M$ whose space of smooth functions $\C$ is endowed with a Poisson bracket $\{\cdot,\cdot\}$, has become a major and active branch stimulated by connections with several disciplines of Mathematics and Mathematical Physics. For a detailed presentation of the scientific and historical development of this subject we refer the book of Y. Kosmann-Schwarzbach \cite{yks-Poisson}.

On a Poisson manifold $(M, \{\cdot,\cdot\})$, we say that two elements $f, g$ of $\C$ are in \emph{involution} if they pairwise commute, i.e.,
\begin{equation*}
\{f,g\}=0.
\end{equation*}
They form an important class of functions on $(M, \{\cdot,\cdot\})$ and they have a dominant role in the integrability problem of Hamiltonian systems. It is well known, a Poisson bracket defines an operator which associates to every smooth function $h$ on $M$ a vector field $X_h$, called \emph{Hamiltonian vector field of $h$ with respect to $\{\cdot,\cdot\}$}. The integrability problem of such a dynamical system investigates the existence of sufficiently many first integrals (constants of motion) which render the integration of differential equations possible. An important property of these vector fields is that their constants of motion are in involution with $h$. Poisson's theorem, which states that the Poisson bracket of two constants of motion of a Hamiltonian system is also a constant of motion of the system (i.e., in other words, the Poisson bracket of two functions in involution with $h$ is a function in involution with $h$), was in Poisson's time a fundamental result that suggested a method of construction of first integrals of $X_h$. However, the fundamental role of functions in involution in integrability theory was pointed out by Liouville's theorem \cite{liou} and its generalization, which was established by V. I. Arnold in \cite{arn} and gave to it a geometrical interpretation (see also \cite{lib-marle}, \cite{per}). The exact formulation of this theorem, known as \emph{Arnold--Liouville Theorem}, is: \emph{Let $f_1,\ldots,f_n$ be $n$ smooth functions in involution on a canonical $2n$-dimensional Poisson manifold $(M,\{\cdot,\cdot\})$. Consider a level set
$\mathcal{M}_c = \{x\in M \, / \, f_1(x)=c_1, \ldots, f_n(x)=c_n\}$ of the functions $f_1,\ldots,f_n$, $c=(c_1,\ldots,c_n)\in \R^n$. Assume that the $n$ functions $f_1,\ldots,f_n$ are functionally independent on $\mathcal{M}_c$. Then
\begin{enumerate}
\item
$\mathcal{M}_c$ is a smooth submanifold of $M$, invariant under the flow of each Hamiltonian vector field $X_{h}$, with $h=f_i$, $i=1,\ldots,n$.
\item
If $\mathcal{M}_c$ is compact and connected, then it is diffeomorphic to the $n$-dimensional torus.
\item
The flow of $X_h$ determines a conditionally periodic motion on $\mathcal{M}_c$.
\item
The solutions of Hamilton's equations $\dot{x}=\{h,x\}$ lying in $\mathcal{M}_c$ can be obtained by quadratures.
\end{enumerate}}
\noindent
We note that in the context of degenerate Poisson structures (of constant rank), Arnold-Liouville Theorem still holds, under some minor adaptations \cite{adler-vanMoer-Pol}.

The above results justify the key role of functions in involution in the study of the integrability of Hamiltonian systems and the interest for the development of methods of constructing functions in involution. Besides Poisson's theorem mentioned above, some other such methods are \cite{CaAnPo}:
\begin{itemize}
\item[-]
\emph{Lax's equations:} If a vector field $X$ on $M$ satisfies Lax's equation
\begin{equation*}
\dot{L}(t)=[L,P],
\end{equation*}
where $(L,P)$ is a Lax pair of matrices, then the functions $\mathrm{Trace}(L^k)$, $k\in \N$, are constants of motion of $X$. The same is true for any coefficient of the characteristic polynomial $\det(L-\lambda I)$ of $L$, since the latter is a polynomial function of $\mathrm{Trace}(L^k)$, $k\in \N$. In particular, if $M$ is a Poisson manifold and $X$ is a Hamiltonian vector field with Hamiltonian function $h$, then $h$ is in involution with each $\mathrm{Trace}(L^k)$, $k\in \N$, and, consequently, with each coefficient of $\det(L-\lambda I)$.
\item[-]
\emph{Thimm's method:} Let $\mathfrak{g}$ be a finite dimensional Lie algebra and let $\mathfrak{g}_1$, $\mathfrak{g}_2$ be two Lie subalgebras of $\mathfrak{g}$ such that $[\mathfrak{g}_1, \mathfrak{g}_2]\subset \mathfrak{g}_2$. We denote by $i_k : \mathfrak{g}_k \hookrightarrow \mathfrak{g}$, $k=1,2$, the inclusions maps and by $i_k^\ast : \mathfrak{g}^\ast \to \mathfrak{g}_k^\ast$ their transpose maps. Then, for any function $f_1$ on $\mathfrak{g}_1^\ast$ and for any $Ad^\ast$-invariant function $f_2$ on $\mathfrak{g}_2^\ast$, the functions $f_1\circ i_1^\ast, f_2\circ i_2^\ast \in C^\infty(\mathfrak{g}^\ast)$ are in involution with respect to Lie-Poisson bracket on $\mathfrak{g}^\ast$.
\item[-]
\emph{Bi-Hamiltonian vector fields:} If a dynamical system $X$ on $M$ can be written in Hamiltonian form in two different compatible ways:
\begin{equation*}
X=\{h_1,\cdot\}_0 = \{h_0,\cdot\}_1, \quad \quad h_0, h_1\in \C,
\end{equation*}
then $h_0$ and $h_1$ are in involution with respect to both Poisson brackets $\{\cdot,\cdot\}_0$ and $\{\cdot,\cdot\}_1$ and to any linear combination of these. Hence, in this case, we can say that $h_0$ and $h_1$ are in \emph{bi-involution}.
\end{itemize}

In this paper we deal with the inverse, so to speak, problem to the one of constructing functions in involution on Poisson manifolds which can be formulated as follows:

\vspace{2mm}
\noindent
{\emph{For a preassigned family $\mathfrak{F}=(f_1,\ldots,f_{r+k})$ of $r+k$ smooth functions on a $2r+k$--dimensional smooth manifold $M$, functionally independent almost everywhere, determine the Poisson brackets $\{\cdot,\cdot\}$ of rank at most $2r$ on $\C$ with respect to which the elements of $\mathfrak{F}$ are in involution.}}

\vspace{2mm}
\noindent
We note that we chose the number $r+k$ as cardinal number of the set $\mathfrak{F}$, because it is a lower bound on the number of functions in involution on $(M,\{\cdot,\cdot\})$ which are enough to completely integrate the Hamiltonian systems on this manifold. This problem, besides being interesting in its own right, it is also closely connected to the theory of \emph{Inverse problems for Differential Equations} \cite{Lli-Ram}. In particular, it is related to the problem of determining, for a given submanifold $\mathcal{M}$ of a manifold $M$, the differential systems whose local flow leaves $\mathcal{M}$ invariant. In our case, the smooth map $\mathfrak{F}=(f_1,\ldots,f_{r+k}) : M \to \R^{r+k}$ (at this point, we denote also by $\mathfrak{F}$ the smooth map defined by the family of functions $\mathfrak{F}$) is a submersion on the open and dense subset $\mathcal{U}$ of $M$ where the components of $\mathfrak{F}$ are functionally independent. Therefore, for any regular value $c=(c_1,\ldots,c_{r+k}) \in \R^{r+k}$ of $\mathfrak{F}$, the set $\mathcal{M}_c= \mathfrak{F}^{-1}(c)= \{x\in \mathcal{U} \,/\, f_1(x) = c_1, \, \ldots, \,  f_{r+k}(x) = c_{r+k}\}$ is a submanifold of $\mathcal{U}$. Hence, by determining the Poisson brackets $\{\cdot,\cdot\}$ on $C^\infty(\mathcal{U})$ for which $\mathfrak{F}$ (as family of functions) is involutive, we can determine differential vector fields on $M$ whose local flow leaves the submanifold $\mathcal{M}_c$ invariant. They will be the Hamiltonian vector fields $\{f_i,\cdot\}$. Our approach for this study rests, as we will explain in Section \ref{section-central}, on the properties of the Casimir invariants of a Poisson pencil, i.e., of a $1$-parameter family of compatible Poisson brackets $\{\cdot , \cdot\}_{(\lambda)}$, $\lambda \in \R\cup \{\infty\}$,
in the sense that any linear combination of elements of this family is also a Poisson bracket, and on the results of \cite{dam-pet} concerning the Poisson brackets with prescribed family of Casimirs. Firstly, we investigate the case where $M$ is of even dimension $2r+2l$, i.e., $k=2l$, and we prove that the brackets with the required properties are given by the formula \begin{equation*}
\{\cdot , \cdot\}_{(\lambda)} \Omega = -\frac{1}{F(\lambda)}d\cdot \wedge\, d\cdot \wedge\big(\sigma_{(\lambda)} + \frac{g_{(\lambda)}}{r-1}\omega\big) \wedge \frac{\omega^{r-2}}{(r-2)!}\wedge dF^1(\lambda)\wedge\ldots \wedge dF^k(\lambda),
\end{equation*}
where $\omega$ is an appropriate (almost) symplectic form on $M$, $\Omega$ is the volume form on $M$ defined by $\omega$, $\sigma_{(\lambda)}$ is a $2$-form on $M$ depended on the parameter $\lambda$ and satisfying some special requirements (see, formul{\ae} (\ref{sigma}) and (\ref{recursion - sigma - X})), $g_{(\lambda)} = i_{\Lambda}\sigma_{(\lambda)}$ ($\Lambda$ being the (almost) Poisson tensor field on $M$ associated to $\omega$), $F^1(\lambda), \ldots, F^k(\lambda)$ are polynomials of the (real) parameter $\lambda$ with coefficients from $\mathfrak{F}$, and $F(\lambda)$ is the polynomial, non-vanishing almost everywhere on $M$, defined by the relation $F(\lambda)^2=\det \big(\{F^i(\lambda), F^j(\lambda)\}_{\omega}\big)$ (with $\{\cdot,\cdot\}_{\omega}$ being the (Poisson) bracket on $\C$ determined by $\omega$). Thereafter, a similar formula is established in the case where $M$ is of odd dimension $2r+2l+1$, i.e., $k=2l+1$, and endowed with an (almost) co-symplectic structure $(\vartheta, \Theta)$ which defines the volume form $\Omega = \vartheta\wedge \Theta^{r+l}$. Then, we show that the demanded brackets are written as \begin{equation*}
\{\cdot , \cdot\}_{(\lambda)} \Omega = -\frac{1}{F(\lambda)}d\cdot \wedge \, d\cdot \wedge\big(\sigma_{(\lambda)} + \frac{g_{(\lambda)}}{r-1}\Theta\big) \wedge \frac{\Theta^{r-2}}{(r-2)!}\wedge dF^1(\lambda)\wedge\ldots \wedge dF^k(\lambda),
\end{equation*}
where $\sigma_{(\lambda)}$ is also a $2$-form on $M$ depended on $\lambda$ and having some particular properties, $g_{(\lambda)} = i_{\Lambda}\sigma_{(\lambda)}$ ($\Lambda$ being the bi-vector field on $M$ defined by $(\vartheta, \Theta)$) and $F^1(\lambda), \ldots, F^k(\lambda)$ are, as in the previous case, polynomials of the (real) parameter $\lambda$ with coefficients from $\mathfrak{F}$.

The paper is organized as follows. In Section \ref{preliminaries} we review the preliminary notions and results that concern Poisson brackets, Poisson pencils and Casimir invariants of a Poisson pencil and that are used in the study which follows. A detailed presentation of our strategy for solving the problem, of the proofs of the main theorems (Theorem \ref{central-theorem} and Theorem \ref{theorem-odd}) and of their parallel results are given in Section \ref{section-central}. Finally, in Section \ref{examples}, we illustrate our theory by some examples that concern the Lagrange's top and the Toda lattice.

\section{Preliminaries}\label{preliminaries}
We start this section by fixing our notation and by recalling briefly some basic notions and results on the theory of Poisson manifolds, which are the natural setting for the study of Hamiltonian dynamical systems. The reference books on this subject are \cite{lib-marle}, \cite{vai-b}, \cite{duf-zung}, \cite{CaAnPo}. The facts that concern bi-Hamiltonian structures can be found in a series of articles of F. Magri and his collaborators. For instance, we cite the papers \cite{magri}, \cite{casati}. In the following, $M$ denotes a finite dimensional real smooth manifold, $TM$ and $T^\ast M$ its tangent and cotangent bundle, respectively, and $C^{\infty}(M)$ the space of smooth functions on $M$. For each $p\in \mathbb{Z}$, we write $\mathcal{V}^p(M)$ and $\Omega^p(M)$ for the space of smooth sections of $\bigwedge^p TM$ and $\bigwedge^p T^\ast M$, respectively. By convention, we set $\mathcal{V}^p(M) = \Omega^p(M) = \{0\}$, for $p<0$, $\mathcal{V}^0(M) = \Omega^0(M) = C^{\infty}(M)$, and, taking into account the skew-symmetry, we have $\mathcal{V}^p(M) = \Omega^p(M) = \{0\}$, for $p>\dim M$. Finally, we set $\mathcal{V}(M)=\oplus_{p\in \mathbb{Z}}\mathcal{V}^p(M)$ and $\Omega(M) = \oplus_{p\in \mathbb{Z}}\Omega^p(M)$.

\subsection{Background on Poisson manifolds}
A \emph{Poisson structure} on a $m$-dimensional smooth manifold $M$ is defined by a bilinear, skew-symmetric map
\begin{equation*}
\{ \cdot, \cdot\} : \C \times \C \to \C
\end{equation*}
that acts as a derivation on itself and on the usual product $"\cdot"$ of functions. The latter means that $\{\cdot,\cdot\}$ verifies, for any $f,g,h \in \C$, the Jacobi identity
\begin{equation*}
\{f,\{g,h\}\} = \{\{f,g\},h\}+\{g,\{f,h\}\}
\end{equation*}
and the Leibniz's rule
\begin{equation*}
\{f, g \cdot h\} = \{f,g\}\cdot h + g\cdot \{f,h\}.
\end{equation*}
So, $\{\cdot,\cdot\}$ defines on $\C$ a Lie algebra structure which is compatible with the product $"\cdot"$. Such a bracket is called \emph{Poisson bracket} and it can be viewed as an operator which associates to every smooth function on $M$ a vector field on $M$. In fact, since it is a derivation on the algebra $(\C, \cdot)$, by fixing the first argument and keeping the other one free, we obtain, for any $f\in \C$, a vector field $X_f = \{f, \cdot\}\in \mathcal{V}^1(M)$ which is called the \emph{Hamiltonian vector field associated to $f$ with respect to $\{\cdot, \cdot\}$}.

By virtue of the above properties, A. Lichnerowicz proved in \cite{lch1} that $\{\cdot,\cdot\}$ gives rise to a bivector field $\Pi$ on $M$, $\Pi: M \to \bigwedge^2TM$, such that
\begin{equation*}
\Pi(df,dg)=\{f,g\} \quad \mathrm{and} \quad [\Pi,\Pi]=0,
\end{equation*}
where $[\cdot,\cdot]$ denotes the Schouten bracket. Conversely, any bivector field $\Pi \in \mathcal{V}^2(M)$ that verifies the last condition defines a Poisson bracket $\{\cdot,\cdot\}$ given, for any $f,g\in \C$, by $\{f,g\}=\Pi(df,dg)$. In this case, $\Pi$ is called a \emph{Poisson tensor} and the pair $(M,\Pi)$ a \emph{Poisson manifold}.

Moreover, $\Pi$, as any bivector field, defines a natural homomorphism $\Pi^\#: \Omega^1(M)\to \mathcal{V}^1(M)$ which maps each element $\alpha \in \Omega^1(M)$ to a unique vector field $\Pi^\#(\alpha)$ such that, for any $\beta\in \Omega^1(M)$,
\begin{equation*}
\langle \beta, \Pi^\#(\alpha)\rangle = \Pi(\alpha,\beta).
\end{equation*}
We remark that $\{f,\cdot\}=\Pi(df,\cdot)=\Pi^\#(df)$. Thus, a Hamiltonian vector field $X_f$, $f\in \C$, on $(M,\Pi)$ can be viewed as the image by $\Pi^\#$ of the exact $1$-form $df$ on $M$. The map $\Pi^\#$ can be extended to a homomorphism, also denoted by $\Pi^\#$, from $\Omega^p(M)$ to $\mathcal{V}^p(M)$, $p\in \N$, by setting, for any $f\in \C$, $\Pi^\#(f)=f$, and, for any $\zeta \in \Omega^p(M)$ and $\alpha_1,\ldots, \alpha_p \in \Omega^1(M)$,
\begin{equation}\label{def-extension P}
\Pi^\#(\zeta)(\alpha_1,\ldots,\alpha_p)=(-1)^p\zeta(\Pi^\#(\alpha_1),\ldots,\Pi^\#(\alpha_p)).
\end{equation}
Thus, $\Pi^\#(\zeta \wedge \eta) = \Pi^\#(\zeta)\wedge \Pi^\#(\eta)$, for all $\eta \in \Omega(M)$. In particular, when $\Pi$ is nondege\-nerate, $\Pi^\# : \Omega^p(M) \to \mathcal{V}^p(M)$, $p\in \N$, is an isomorphism and any bivector field $\Pi'$ on $(M, \Pi)$ can be viewed as the image $\Pi^\#(\sigma)$ of a $2$-form $\sigma$ on $M$ by $\Pi^\#$. In \cite{dam-pet}, we proved:
\begin{proposition}\label{prop-Pi-sigma}
The bivector field $\Pi' = \Pi^\#(\sigma)$ defines a Poisson structure on $(M, \Pi)$ if and only if
\begin{equation*}
\delta (\sigma\wedge\sigma) = 2\sigma\wedge\delta(\sigma),
\end{equation*}
where $\delta = \ast d \ast$ is the codifferential operator of degree $-1$ defined on $\Omega(M)$ by the isomorphism $\ast : \Omega^p(M) \to \Omega^{2n-p}(M)$ which maps any $p$-form $\varphi$ on $M$ to the $(2n-p)$-form $\ast \varphi = i_{\Pi^\#(\varphi)}\displaystyle{\frac{\omega^n}{n!}}$, $\omega$ being the almost symplectic form on $M$ defined by $\Pi$.
\end{proposition}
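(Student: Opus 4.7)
The plan is to convert the Jacobi identity for $\Pi'$, expressed as $[\Pi',\Pi']_S=0$ (Schouten bracket), into an equivalent statement on the 2-form $\sigma$ via the isomorphism $\Pi^\#:\Omega^\bullet(M)\to\mathcal{V}^\bullet(M)$ afforded by nondegeneracy of $\Pi$. The pivotal ingredient is Koszul's identity, which asserts that $\Pi^\#$ intertwines the Schouten bracket on multivectors with a graded bracket on forms built from $\delta$: for $\alpha\in\Omega^p(M)$ and $\beta\in\Omega(M)$,
$$
[\Pi^\#\alpha,\Pi^\#\beta]_S \;=\; -\Pi^\#\bigl([\alpha,\beta]_K\bigr),
\qquad
[\alpha,\beta]_K \;:=\; (-1)^{p}\bigl(\delta(\alpha\wedge\beta) - \delta\alpha\wedge\beta - (-1)^{p}\alpha\wedge\delta\beta\bigr).
$$
A brief preliminary check is that our $\delta=\ast d\ast$, with $\ast\varphi=i_{\Pi^\#(\varphi)}(\omega^n/n!)$, coincides with the Koszul--Brylinski operator $[i_\Pi,d]=i_\Pi d-d\,i_\Pi$ on $\Omega(M)$; this is a standard computation with the symplectic star.

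Granted Koszul's identity, the proof reduces to a direct specialization. Since $\Pi^\#$ is injective, $[\Pi',\Pi']_S=0$ is equivalent to $[\sigma,\sigma]_K=0$. Setting $\alpha=\beta=\sigma$, and using that $\delta\sigma$ is a $1$-form (so $\delta\sigma\wedge\sigma=\sigma\wedge\delta\sigma$), one computes
$$
[\sigma,\sigma]_K \;=\; \delta(\sigma\wedge\sigma) \;-\; 2\,\sigma\wedge\delta\sigma,
$$
whose vanishing is exactly the stated criterion. The intuitive content is that $\delta$ fails to be a graded derivation, and its defect on $\sigma\otimes\sigma$ is precisely the obstruction to $\Pi'$ being Poisson.

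The principal obstacle is Koszul's intertwining identity itself. A self-contained derivation would proceed in two steps: first verify the formula on exact $1$-forms, where $\Pi^\#(df)=X_f$, reducing it to the classical relation $[X_f,X_g]=X_{\{f,g\}}$ together with $\delta(df)=0$ and $\delta(df\wedge dg)=\pm d\{f,g\}$; then extend to arbitrary forms by invoking the fact that both the Schouten bracket on $\mathcal{V}^\bullet(M)$ and the bracket $[\cdot,\cdot]_K$ on $\Omega^\bullet(M)$ are graded derivations with respect to wedge product, so agreement on generators suffices. Alternatively, one can avoid the general identity and compute $[\Pi^\#\sigma,\Pi^\#\sigma]_S(df,dg,dh)$ directly in local Darboux coordinates, re-assembling the cyclic sum into $\bigl(\delta(\sigma\wedge\sigma)-2\,\sigma\wedge\delta\sigma\bigr)$ evaluated on $df\wedge dg\wedge dh$; this is tedious but elementary and yields both directions of the equivalence simultaneously.
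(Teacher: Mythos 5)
Your skeleton is the right one --- transport $[\Pi',\Pi']_S$ through the isomorphism $\Pi^\#$ and recognize the Jacobi condition as the failure of $\delta$ to be a derivation on $\sigma\wedge\sigma$ --- and your final specialization to $\alpha=\beta=\sigma$ is computed correctly. (Note that the paper itself gives no proof here; it quotes the statement from \cite{dam-pet}, where the argument is precisely of this generating--operator type.) There is, however, a genuine gap in how you propose to justify the key intertwining identity. The proposition is stated for a merely \emph{nondegenerate} bivector $\Pi$, whose associated $2$-form $\omega$ is only \emph{almost} symplectic; this generality is essential, since in Section \ref{section-central} the ambient $\Lambda$ is never assumed to be Poisson. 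All three facts you lean on are specific to the integrable case: (i) the identification $\ast d\ast=[i_\Pi,d]$ is Brylinski's identity and requires $d\omega=0$; (ii) the morphism property $[\Pi^\#\alpha,\Pi^\#\beta]_S=-\Pi^\#[\alpha,\beta]_K$ for the Poisson--Koszul bracket requires $[\Pi,\Pi]=0$; (iii) your verification on generators rests on $[X_f,X_g]=X_{\{f,g\}}$, which \emph{is} the Jacobi identity for $\Pi$ and fails otherwise (likewise, Darboux coordinates do not exist when $\omega$ is not closed). As written, your argument therefore proves the statement only when $\Pi$ is itself Poisson.

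The repair is small and preserves your structure. Work with the volume form $\Omega=\omega^n/n!$ and the curl operator $D_\Omega:\mathcal{V}^p(M)\to\mathcal{V}^{p-1}(M)$ defined by $i_{D_\Omega P}\Omega=d(i_P\Omega)$. Koszul's formula
\begin{equation*}
[P,Q]=(-1)^p\bigl(D_\Omega(P\wedge Q)-D_\Omega P\wedge Q-(-1)^p P\wedge D_\Omega Q\bigr)
\end{equation*}
holds for \emph{any} volume form, with no integrability hypothesis on $\omega$ or on $\Pi$. Since $\ast^2=\mathrm{id}$ (a pointwise linear-algebra fact valid for any nondegenerate $2$-form), one gets $\Pi^\#\circ\delta=D_\Omega\circ\Pi^\#$, and since $\Pi^\#(\zeta\wedge\eta)=\Pi^\#(\zeta)\wedge\Pi^\#(\eta)$, applying Koszul's formula with $P=Q=\Pi'=\Pi^\#(\sigma)$ yields $[\Pi',\Pi']=\Pi^\#\bigl(\delta(\sigma\wedge\sigma)-2\sigma\wedge\delta(\sigma)\bigr)$ up to an irrelevant overall sign; injectivity of $\Pi^\#$ then gives both directions of the equivalence at once.
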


In the same paper \cite{lch1}, A. Lichnerowicz demonstrated another fundamental and beautiful property of $\Pi^\#$. Its image $\mathrm{Im}(\Pi^\#)$ defines a generalized foliation $\mathcal{S}$ of $M$, called the \emph{symplectic foliation of $(M,\Pi)$}. The leaves of $\mathcal{S}$ are symplectic immersed submanifolds of $(M,\Pi)$ which, locally, can be viewed as the common level sets of $m-2r$ ($2r$ being the rank of the skew-symmetric matrix associated to the Poisson tensor $\Pi$) functionally independent smooth functions on $M$ known as \emph{Casimir invariants of $\Pi$}. They are characterized by the solution of the system of partial differential equations $\Pi^\#(dC)=0$ and they can be considered as the functions $C$ which generate the null Hamiltonian vector field $X_C = \Pi^\#(dC)=0$, or, equivalently, as the elements of the center of the Lie algebra $(\C, \{\cdot,\cdot\})$:
\begin{equation*}
\{C,\cdot\}=0.
\end{equation*}
As such, they are conserved quantities of any Hamiltonian system $X_h$ of $(M,\Pi)$, i.e.,
\begin{equation*}
X_h(C)=\langle dC, X_h \rangle = \langle dC, \Pi^\#(dh)\rangle = \{h,C\}=0.
\end{equation*}
We thus get that they are in involution with any $h\in \C$ and therefore play a dominant role in the study of integrability and reduction of the order of Hamiltonian systems, in the Energy-Casimir method of determining stability of equilibrium points of said systems \cite{jm-mec}, and in a variety of other problems.

\subsection{Poisson brackets with given Casimirs}\label{Poisson given Casimirs}
In \cite{dam-pet} we considered the problem of constructing Poisson brackets on smooth manifolds $M$ with prescribed family of Casimir functions. Our interest for this question was generated by the following facts:
\begin{itemize}
\item[-]
For any smooth function $f$ on $\R^3$ the bracket
\begin{equation*}
\{x,y\}\Omega = dx\wedge dy \wedge df, \quad \{x,z\}\Omega = dx \wedge dz \wedge df, \quad \{y,z\}\Omega = dy\wedge dz \wedge df,
\end{equation*}
where $\Omega = dx\wedge dy\wedge dz$ is the standard volume form on $\R^3$, is Poisson and admits $f$ as Casimir.
\item[-]
If $C_1,\ldots,C_l$ are $l$ functionally independent smooth functions on $\R^{l+2}$ and $\Omega$ a volume element on $\R^{l+2}$, then the formula
\begin{equation}\label{br-Casimir-2}
\{g,h\}\Omega = fdg\wedge dh\wedge dC_1\wedge \ldots \wedge dC_l, \quad \quad g,h \in C^\infty(\R^{l+2}),
\end{equation}
defines a Poisson bracket of rank $2$ on $\R^{l+2}$ with $C_1,\ldots,C_l$ as Casimirs \cite{Grab93}.
\end{itemize}
The above type of Poisson bracket is called \emph{Jacobian Poisson bracket} because $\{g,h\}$ is equal, up to a coefficient function $f$, with the determinant of the usual Jacobian matrix of $(g,h,C_1,\ldots,C_l)$. It has appeared in the theory of transverse Poisson structures to subregular nilpotent orbits of $\mathfrak{gl}(n, \mathbb{C})$, $n\leq 7$, \cite{Dam89}, \cite{Dam96}, and of any semi-simple Lie algebra \cite{Dam07}, and, also, in the theory of polynomial Poisson algebras with some regularity conditions \cite{or}. Hence, our aim was to extend, if possible, formula (\ref{br-Casimir-2}) in the more general case of higher rank Poisson brackets and
study its applications. Firstly, we studied this problem in the case where $M$ is even-dimensional ($\dim M =2n$) endowed with a suitable (almost) symplectic form $\omega$. Then, we proved (see, Theorem 3.3 in \cite{dam-pet}) that, for given $2n-2r$ smooth functions $C_1, \ldots, C_{2n-2r}$ on $M$, functionally independent almost everywhere, and for any $2$-form $\sigma$ on $M$ satisfying certain special requirements, the bracket
\begin{equation*}\label{bracket-even}
\{h_1,h_2\} \Omega =  -\frac{1}{f} dh_1 \wedge dh_2 \wedge (\sigma + \frac{g}{r-1}\omega) \wedge \frac{\omega^{r-2}}{(r-2)!}\wedge dC_1\wedge\ldots \wedge dC_{2n-2r}, \quad h_1, h_2\in \C,
\end{equation*}
is a Poisson bracket of maximal rank $2r$ having the given functions as Casimirs. Here, $\Omega=\displaystyle{\frac{\omega^n}{n!}}$ is a volume element on $M$, $f$ satisfies $f^2 = \det \big(\{f_i,f_j\}_{\omega}\big)\neq 0$ ($\{\cdot,\cdot\}_{\omega}$ being the bracket defined by $\omega$ on $\C$) and $g = i_{\Lambda}\sigma$ ($\Lambda$ being the bivector field on $M$ associated to $\omega$). We proceeded by considering the case $\dim M = 2n+1$ and we established a similar formula for the Poisson brackets on $\C$ of rank at most $2r$ with Casimir invariants a given family $(C_1,\ldots,C_{2n+1-2r})$ of $2n+1-2r$ smooth functions on $M$. More precisely, we proved that such a bracket can be written as
\begin{equation*}
\{h_1,h_2\} \Omega =  -\frac{1}{f} dh_1 \wedge dh_2 \wedge (\sigma + \frac{g}{r-1}\Theta) \wedge \frac{\Theta^{r-2}}{(r-2)!}\wedge dC_1\wedge\ldots \wedge dC_{2n+1-2r}.
\end{equation*}
In the above formula, $(\vartheta,\Theta)$ is a suitable (almost) cosymplectic structure on $M$, $\Omega = \vartheta \wedge \displaystyle{\frac{\Theta^n}{n!}}$ is the corresponding volume form, $f$ an element of $\C$, $\sigma$ a $2$-form on $M$ satisfying certain  particular conditions (see, Theorem 3.7 in \cite{dam-pet}), and $g=i_{\Lambda}\sigma$ ($\Lambda$ being the bivector field on $M$ associated to $(\vartheta,\Theta)$).

\subsection{Casimir invariants of a Poisson pencil}\label{Casimir of pencil}
A \emph{bi-Hamiltonian} structure on a smooth manifold $M$ is defined by a pair $(\Pi_0,\Pi_1)$ of compatible Poisson structures on $M$ in the sense that $\Pi_0+\Pi_1$ is still a Poisson structure. The last condition happens if and only if the Schouten bracket $[\Pi_0,\Pi_1]$ vanishes identically on $M$. Then, any linear combination of $\Pi_0$ and $\Pi_1$ produces another Poisson structure on $M$ and any pair of structures of type $\lambda_0\Pi_0 + \lambda_1\Pi_1$, $\lambda_0,\lambda_1 \in \R$, is also a pair of compatible Poisson structures on $M$.

We consider a bi-Hamiltonian manifold $(M,\Pi_0,\Pi_1)$ and we denote by $\{\cdot,\cdot\}_i$ the Poisson bracket on $\C$ defined by $\Pi_i$, $i=0,1$. For any $\lambda \in \R\cup\{\infty\}$, we set
$\Pi_{(\lambda)} = \Pi_1-\lambda \Pi_0$. It is a Poisson tensor with corresponding Poisson bracket
\begin{equation}\label{br-pencil}
\{\cdot,\cdot\}_{(\lambda)} = \{\cdot,\cdot\}_1-\lambda\{\cdot,\cdot\}_0.
\end{equation}
The $1$-parameter family of Poisson tensors $(\Pi_{(\lambda)})_{\lambda \in \R\cup\{\infty\}}$ on $M$ is referred as \emph{Poisson pencil} and the family (\ref{br-pencil}) as \emph{pencil of Poisson brackets}. Analogously to the definition of Casimir function of a Poisson bracket, we define the notion of \emph{Casimir invariant of a Poisson pencil} as a function depended on the parameter $\lambda$ which commutes with any other function on $M$ with respect to each bracket of the family (\ref{br-pencil}).

A \emph{bi-Hamiltonian} hierarchy on $(M,\Pi_0,\Pi_1)$ is a sequence $(h_k)_{k\in \N}$ of smooth functions on $M$ fulfilling the Lenard-Magri recursion relations:
\begin{equation}\label{Lenard-Magri-1}
\{\cdot,h_{k+1}\}_0 = \{\cdot,h_k\}_1, \quad \quad k\in \N.
\end{equation}
In terms of Poisson tensors, equations (\ref{Lenard-Magri-1}) are written as
\begin{equation*}
\Pi_0^\#(dh_{k+1})=\Pi_1^\#(dh_k), \quad \quad k\in \N,
\end{equation*}
which means that a bi-Hamiltonian hierarchy gives rise to an infinite sequence of bi-Hamiltonian vector fields.

\begin{proposition}\label{hierarchy 1}
The functions of a bi-Hamiltonian hierarchy $(h_k)_{k\in \N}$ on $(M,\Pi_0,\Pi_1)$ are in involution with respect to any Poisson bracket of the pencil (\ref{br-pencil}).
\end{proposition}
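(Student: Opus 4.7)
The plan is to derive the involutivity from the Lenard--Magri relations (\ref{Lenard-Magri-1}) by a short symmetry argument, avoiding any explicit use of the Jacobi identity.

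Set $A^\alpha_{i,j} := \{h_i,h_j\}_\alpha$ for $\alpha\in\{0,1\}$ and $i,j\in\N$. The recursion (\ref{Lenard-Magri-1}), applied in the second slot with $f=h_i$, gives
\begin{equation*}
A^0_{i,j+1} \;=\; \{h_i,h_{j+1}\}_0 \;=\; \{h_i,h_j\}_1 \;=\; A^1_{i,j},
\end{equation*}
while applying it in the first slot (after using skew-symmetry of both brackets) gives $A^0_{i+1,j}=A^1_{i,j}$. Combining these two identities yields
\begin{equation*}
A^0_{i+1,j} \;=\; A^0_{i,j+1},
\end{equation*}
so the quantity $A^0_{i,j}$ depends only on the sum $i+j$. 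The skew-symmetry $A^0_{i,j}=-A^0_{j,i}$ together with this dependence on $i+j$ alone forces $A^0_{i,j}=-A^0_{i,j}$, hence $A^0_{i,j}=0$ for all $i,j\in\N$. Going back to the recursion, $A^1_{i,j}=A^0_{i,j+1}=0$ as well, so the functions of the hierarchy pairwise commute with respect to both $\{\cdot,\cdot\}_0$ and $\{\cdot,\cdot\}_1$.

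To conclude, observe that any bracket $\{\cdot,\cdot\}_{(\lambda)}$ of the pencil (\ref{br-pencil}) is by definition a linear combination of $\{\cdot,\cdot\}_0$ and $\{\cdot,\cdot\}_1$, so
\begin{equation*}
\{h_i,h_j\}_{(\lambda)} \;=\; \{h_i,h_j\}_1 - \lambda\{h_i,h_j\}_0 \;=\; 0
\end{equation*}
for every $i,j\in\N$ and every $\lambda\in\R\cup\{\infty\}$ (the case $\lambda=\infty$ being $\{\cdot,\cdot\}_0$, already handled). I do not anticipate a serious obstacle: the only subtlety is the bookkeeping in the ``symmetry in $i+j$'' step, where one must be careful to apply the recursion in each slot correctly and to invoke the skew-symmetry of the brackets at the right moment. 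Once this is handled cleanly, the involutivity with respect to the full pencil is automatic.
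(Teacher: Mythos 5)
Your argument is correct and is essentially the paper's own proof in different packaging: the identity $A^0_{i+1,j}=A^0_{i,j+1}$ obtained from the Lenard--Magri relations in each slot is exactly the shifting step the paper iterates ($\{h_j,h_k\}_0=\{h_{j+1},h_{k-1}\}_0=\cdots=\{h_k,h_j\}_0$), and both proofs then conclude by skew-symmetry and extend to the pencil by linearity. No gaps.
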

\begin{proof}
Indeed, for $k>j$, we set $d=k-j$ and we have
\begin{eqnarray*}
\{h_j, h_k\}_0 & = & \{h_j, h_{k-1}\}_1 = \{h_{j+1}, h_{k-1}\}_0 = \ldots = \{h_{j+d},h_{k-d}\}_0\\
  & = & \{h_k, h_j\}_0.
\end{eqnarray*}
So, $\{h_j, h_k\}_0 = 0$, for all $j, k \in \N$. Similarly, we get $\{h_j, h_k\}_1 = 0$ and $\{h_j, h_k\}_{(\lambda)} = 0$, for any $\lambda\in \R\cup \{\infty\}$.
\end{proof}

\begin{proposition}\label{hierarchy 2}
If $(g_k)_{k \in \N}$, $(h_k)_{k \in \N}$ are two bi-Hamiltonian hierarchies, then all functions are in bi-involution under the assumption that one of the two sequences starts from a Casimir invariant of $\{\cdot, \cdot\}_0.$
\end{proposition}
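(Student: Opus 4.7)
The goal is to establish $\{g_j, h_k\}_0 = 0$ and $\{g_j, h_k\}_1 = 0$ for all $j, k \in \N$; the within-sequence brackets $\{g_j, g_k\}_\nu$ and $\{h_j, h_k\}_\nu$ are already handled by Proposition \ref{hierarchy 1}, and the vanishing of the pencil bracket $\{g_j, h_k\}_{(\lambda)}$ will then follow from the cross cases by bilinearity in (\ref{br-pencil}). Without loss of generality I assume that the sequence $(g_k)_{k \in \N}$ is the one starting from a Casimir of $\{\cdot,\cdot\}_0$, so that $\{f, g_0\}_0 = 0$ for every $f \in \C$.

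The plan is to derive a one-step shift identity
\begin{equation*}
\{g_j, h_k\}_0 \;=\; \{g_{j-1}, h_{k+1}\}_0, \qquad j \geq 1,
\end{equation*}
by applying each Lenard-Magri recursion (\ref{Lenard-Magri-1}) exactly once, in the spirit of the telescoping computation used for Proposition \ref{hierarchy 1}. Concretely, skew-symmetry together with the recursion for $(g_k)$ gives $\{g_j, h_k\}_0 = -\{h_k, g_j\}_0 = -\{h_k, g_{j-1}\}_1 = \{g_{j-1}, h_k\}_1$; then reading the recursion for $(h_k)$ in the form $\{\cdot, h_k\}_1 = \{\cdot, h_{k+1}\}_0$ converts the $1$-bracket back into a $0$-bracket while raising the $h$-index, yielding the claimed identity.

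Iterating this shift exactly $j$ times reduces the problem to the single bracket $\{g_0, h_{k+j}\}_0$, which vanishes identically because $g_0$ is a Casimir of $\{\cdot,\cdot\}_0$. Hence $\{g_j, h_k\}_0 = 0$ for all $j, k \in \N$. One further application of the $(h_k)$ recursion then gives $\{g_j, h_k\}_1 = \{g_j, h_{k+1}\}_0 = 0$, and (\ref{br-pencil}) immediately extends the conclusion to every bracket of the pencil, establishing bi-involution of all pairs $(g_j, h_k)$.

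The argument is essentially a routine telescoping, parallel to the one appearing in Proposition \ref{hierarchy 1}; the only point requiring care is to apply the recursion for each hierarchy in the correct direction, so that the $g$-index decreases while the $h$-index increases at each step, allowing the reduction to terminate at the Casimir $g_0$. I do not anticipate a genuine obstacle beyond this bookkeeping.
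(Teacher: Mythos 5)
Your argument is correct and is essentially the same telescoping computation as the paper's: shift the $g$-index down and the $h$-index up one step at a time via the two Lenard--Magri recursions until you reach $\{g_0,h_{k+j}\}_0=0$, then deduce the $1$-bracket and pencil cases. The only difference is that you make the intermediate skew-symmetry step and the appeal to Proposition \ref{hierarchy 1} for the within-sequence brackets explicit, which the paper leaves implicit.
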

\begin{proof}
Suppose that $g_0$ is such a Casimir. Then, for any $j,k\in \N$,
\begin{eqnarray*}
\{g_j, h_k\}_0 & = & \{g_{j-1}, h_k\}_1 = \{g_{j-1}, h_{k+1}\}_0 = \ldots \\
  & = & \{g_0, h_{k+j}\}_0 = 0.
\end{eqnarray*}
Thus,
\begin{equation*}
\{g_j, h_k\}_0 = \{g_{j-1},h_k\}_1 =0, \quad \mathrm{for} \;\,\mathrm{all}\;j,k\in \N.
\end{equation*}
Furthermore, the above yield that $\{g_j, h_k\}_{(\lambda)}=0$, for any $\lambda\in \R\cup \{\infty\}$.
\end{proof}

\vspace{2mm}

We remark that, if $\Pi_0$ is degenerate and a bi-Hamiltonian hierarchy $(C_k)_{k \in \N}$ of $(M, \Pi_0, \Pi_1)$ starts from a Casimir function $C_0$ of $\Pi_0$, then an immediate consequence of relations (\ref{Lenard-Magri-1}) is the next:
\begin{corol}\label{Corollary-coefficient-Casimir}
The formal Laurent series
\begin{equation}\label{laurent-series}
C(\lambda) = C_0 + C_1\lambda^{-1} +\cdots + C_k\lambda^{-k}+\cdots
\end{equation}
is a Casimir function of the Poisson pencil $\Pi_{(\lambda)}$, $\lambda \in \R\cup\{\infty\}$. Reciprocally, if a Casimir $C(\lambda)$ of $\{\cdot,\cdot\}_{(\lambda)}$ can be developed in a formal Laurent series as in (\ref{laurent-series}), then its coefficients consist a bi-Hamiltonian hierarchy.
\end{corol}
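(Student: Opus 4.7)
The plan is to prove both directions by substituting the formal Laurent series $C(\lambda)$ into the pencil bracket $\{f,C(\lambda)\}_{(\lambda)}=\{f,C(\lambda)\}_1-\lambda\{f,C(\lambda)\}_0$ for an arbitrary test function $f\in \C$, expanding in powers of $\lambda$, and equating coefficients. Both directions are essentially an algebraic repackaging of the Lenard--Magri chain (\ref{Lenard-Magri-1}) as a generating function identity, so the whole argument reduces to bookkeeping with formal series rather than any genuinely geometric computation.

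For the forward direction, assume $(C_k)_{k\in\N}$ is a bi-Hamiltonian hierarchy with $C_0$ a Casimir of $\Pi_0$. I would substitute the series for $C(\lambda)$ to obtain
\begin{equation*}
\{f,C(\lambda)\}_{(\lambda)}=\sum_{k\ge 0}\{f,C_k\}_1\lambda^{-k}-\sum_{k\ge 0}\{f,C_k\}_0\lambda^{1-k},
\end{equation*}
then isolate the $\lambda^{1}$ term $-\lambda\{f,C_0\}_0$ of the second sum and reindex the remaining part via $j=k-1$. The recursion $\{f,C_{j+1}\}_0=\{f,C_j\}_1$ then cancels what is left against the first sum term by term, so that $\{f,C(\lambda)\}_{(\lambda)}=-\lambda\{f,C_0\}_0$; the Casimir hypothesis on $C_0$ forces this to vanish, for every $f$, proving that $C(\lambda)$ is a Casimir of the pencil.

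For the converse, suppose $C(\lambda)$ is a Casimir of $\{\cdot,\cdot\}_{(\lambda)}$, so that the same expansion vanishes identically as a formal series in $\lambda$. Matching the coefficient of $\lambda^{1}$ produces $\{f,C_0\}_0=0$, i.e.\ $C_0$ is a Casimir of $\Pi_0$, while matching the coefficient of $\lambda^{-k}$, for $k\ge 0$, yields $\{f,C_k\}_1=\{f,C_{k+1}\}_0$; since $f$ is arbitrary, this is exactly (\ref{Lenard-Magri-1}), so $(C_k)_{k\in\N}$ is a bi-Hamiltonian hierarchy starting from a Casimir of $\Pi_0$.

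No step in the argument is genuinely difficult; the only point requiring a little care is to legitimize substituting the formal series $C(\lambda)$ into the $\R$-bilinear brackets. I would handle this by working in the ring of formal Laurent series in $\lambda^{-1}$ with coefficients in $\C$, where $\{f,\cdot\}_0$ and $\{f,\cdot\}_1$ extend coefficient-wise. Equivalently, one may read the identity to be verified as the generating-function form $\Pi_{(\lambda)}^\#(dC(\lambda))=0$ of the system $\Pi_0^\#(dC_0)=0$ together with $\Pi_0^\#(dC_{k+1})=\Pi_1^\#(dC_k)$, $k\in\N$, which makes it manifest that the corollary is nothing more than a restatement of (\ref{Lenard-Magri-1}).
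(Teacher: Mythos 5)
Your proof is correct and follows essentially the same route as the paper: substitute the formal Laurent series into $\{\cdot,C(\lambda)\}_{(\lambda)}=\{\cdot,C(\lambda)\}_1-\lambda\{\cdot,C(\lambda)\}_0$, collect powers of $\lambda$, and observe that the vanishing of all coefficients is equivalent to $\{\cdot,C_0\}_0=0$ together with the Lenard--Magri relations (\ref{Lenard-Magri-1}). Your added remark about working coefficient-wise in the ring of formal Laurent series in $\lambda^{-1}$ makes explicit a point the paper leaves implicit, but it does not change the argument.
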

\begin{proof} Effectively, we have
\begin{eqnarray*}
\{\cdot, C(\lambda)\}_{(\lambda)} = 0 & \Leftrightarrow & \{\cdot,\, C_0 + C_1\lambda^{-1} +\cdots + C_k\lambda^{-k}+\cdots\}_1 \\
& & - \;\lambda\{\cdot,\, C_0 + C_1\lambda^{-1} +\cdots + C_k\lambda^{-k}+\cdots\}_0 =0 \\
& \Leftrightarrow & -\; \lambda\{\cdot, C_0\cdot\}_0 + (\{\cdot, C_0\}_1-\{\cdot,C_1\}_0) + (\{\cdot, C_1\}_1-\{\cdot,C_2\}_0)\lambda^{-1}\\
& &  + \;\ldots\; + (\{\cdot, C_k\}_1-\{\cdot,C_{k+1}\}_0)\lambda^{-k} +\; \ldots \;= 0.
\end{eqnarray*}
The last equality is true if and only if $\{\cdot, C_0\}_0 = 0$ and $\{\cdot, C_k\}_1-\{\cdot,C_{k+1}\}_0 =0$, for any $k\in \N$.
\end{proof}

\vspace{2mm}

Finally, we note that, in the particular case where inside the family $(C_k)_{k \in \N}$ of coefficients in (\ref{laurent-series}) there is a Casimir function $C_n$ of $\Pi_1$, then the Casimir $C(\lambda)$
of $\Pi_{(\lambda)}$ takes the form of a polynomial
\begin{equation*}
C(\lambda) = C_0\lambda^n + C_1\lambda^{n-1} +\cdots + C_{n-1}\lambda + C_n.
\end{equation*}

\vspace{2mm}

The above mentioned indicate that a process for building bi-Hamiltonian hierarchies on $(M,\Pi_0,\Pi_1)$ and, by consequence, functions in involution with respect to $\Pi_0$ and $\Pi_1$, is to look for the Casimir invariants $C(\lambda)$ of the Poisson pencil $(\Pi_{(\lambda)})_{\lambda \in \R\cup\{\infty\}}$, written in formal series of $\lambda^{-1}$, which are deformations of Casimir functions of $\Pi_0$. The main results of the theory are due to I. M. Gel'fand and I. Zakharevich \cite{GZ-Toda} (sections 10 and 11) and can be formalized in the following theorem.
\begin{theorem}
Let $(M,\Pi_0,\Pi_1)$ be a $2r+k$-dimensional bi-Hamiltonian manifold and $(\Pi_{(\lambda)})_{\lambda \in \R\cup \{\infty\}}$ the Poisson pencil on $M$ produced by $(\Pi_0,\Pi_1)$. We assume that,
for almost all the values of the parameter $\lambda$ and almost everywhere on $M$, $\Pi_{(\lambda)}$ has rank $2r$. Then, on a neighborhood of a generic point of $M$ there exist $k$ functionally
independent Casimir functions of the Poisson pencil $(\Pi_{(\lambda)})_{\lambda \in \R \cup\{\infty\}}$ depended on $\lambda$ which are power series in the parameter $\lambda^{-1}$.
\end{theorem}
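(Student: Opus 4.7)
My plan is to construct each of the $k$ pencil Casimirs as a formal power series
\[
C^{(i)}(\lambda) = C^{(i)}_0 + C^{(i)}_1 \lambda^{-1} + C^{(i)}_2 \lambda^{-2} + \cdots, \qquad i = 1, \ldots, k,
\]
and reduce the theorem to an inductive construction of its coefficients. By Corollary \ref{Corollary-coefficient-Casimir}, such a series annihilates every bracket of the pencil exactly when its coefficients form a bi-Hamiltonian hierarchy, that is, $C^{(i)}_0$ is a Casimir of $\Pi_0$ and $\Pi_0^\#(dC^{(i)}_{j+1}) = \Pi_1^\#(dC^{(i)}_j)$ for every $j \geq 0$. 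The theorem therefore reduces to (a) producing $k$ functionally independent seed Casimirs of $\Pi_0$ that are pairwise in involution with respect to $\Pi_1$ as well, and (b) solving the Lenard-Magri recursion indefinitely and consistently.

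For (a), letting $\lambda \to \infty$ in the uniform rank hypothesis shows that $\Pi_0$ is itself generically of rank $2r$, so $\ker \Pi_0^\#$ is a sub-bundle of $T^\ast M$ of constant rank $k$ near a generic point $p$; Weinstein's splitting theorem then provides $k$ functionally independent local Casimirs $C^{(1)}_0, \ldots, C^{(k)}_0$ of $\Pi_0$. To force the extra bi-involutivity $\{C^{(i)}_0, C^{(l)}_0\}_1 = 0$, I would instead choose the seeds as the leading coefficients of $k$ generators of $\ker \Pi_{(\lambda)}^\#$ viewed as a module of $\lambda^{-1}$-formal-series $1$-forms; the matrix pencil $\Pi_{(\lambda)}^\#$ being of constant rank $2r$ admits a Kronecker-type normal-form decomposition near $\lambda = \infty$ that produces such generators with pointwise independent, closed leading terms, and the identity $\Pi_{(\lambda)}(\alpha^{(i)}(\lambda), \alpha^{(l)}(\lambda)) \equiv 0$, read at the $\lambda^0$-coefficient, forces precisely $\{C^{(i)}_0, C^{(l)}_0\}_1 = 0$.

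For (b), the inductive step requires $\Pi_1^\#(dC^{(i)}_j) \in \mathrm{Im}(\Pi_0^\#)$, equivalently the tangency condition $\{C^{(i)}_j, C\}_1 = 0$ for every local Casimir $C$ of $\Pi_0$. The base case is secured by (a); propagation along $j$ is the standard Lenard-Magri computation, whose key input is the compatibility $[\Pi_0, \Pi_1] = 0$ applied via the graded Jacobi identity for Schouten brackets. Once tangency is secured at step $j$, a leafwise Poincar\'e lemma along the symplectic foliation of $\Pi_0$ integrates $\Pi_1^\#(dC^{(i)}_j)$ to a smooth primitive $C^{(i)}_{j+1}$ on a neighborhood of $p$. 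The main obstacle will be the clean execution of (a): namely, exploiting the uniform rank hypothesis to extract a normal form of the pencil at $\lambda = \infty$ with closed leading $1$-forms, and thereby securing the seed bi-involutivity. The rest is inductive bookkeeping, and functional independence of the $k$ resulting series for generic $\lambda$ follows at once from independence of the seeds, since independence is an open condition that persists under $\lambda^{-1}$-small perturbations.
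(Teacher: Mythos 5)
Your proposal follows essentially the same route as the paper, which only sketches this result (it is due to Gel'fand and Zakharevich): take $k$ functionally independent Casimirs of $\Pi_0$ as seeds, solve the Lenard--Magri recursion using the compatibility $[\Pi_0,\Pi_1]=0$, and assemble the resulting hierarchies into formal series in $\lambda^{-1}$ via Corollary \ref{Corollary-coefficient-Casimir}. The details you add --- the seed bi-involutivity (which in fact follows from the rank hypothesis by pairing $\ker\Pi_{(\lambda)}$ against $\ker\Pi_{(\mu)}$ and letting $\mu$ tend to $\lambda$, with no need for the full Kronecker normal form) and the leafwise Poincar\'e lemma for the inductive step --- are precisely the ``topological assumptions'' the paper leaves implicit.
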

\begin{sk-proof}
Consider $k$ functionally independent Casimir functions $C_0^1,\ldots,C_0^k$ of $\Pi_0$ on a neighborhood of a generic point of $M$. Under the assumption of the compatibility of $\Pi_0$ with $\Pi_1$ and some topological assumptions, we can conclude that there exists a locally defined solution $(C_s^i)_{s\in \N}$ to the recursion relations (\ref{Lenard-Magri-1}) with $C_0^i$ as the initial data, for any $i=1,\ldots,k$. Also, there are numbers $t_1,\ldots,t_k$ such that, for each $i=1,\ldots,k$, we can extract a collection $(C_s^i)_{0\leq s \leq t_i}$ of functionally independent functions from the family $(C_s^i)_{s\in \N}$ and all
functions $C_s^i$, $i=1,\ldots,k$, $s\in \N$, depend functionally on this collection. Then the functions $C^i(\lambda) = \sum_{s\in \N}C_s^i\lambda^{-s}$ are Casimir invariants
of $\{\cdot,\cdot\}_{(\lambda)}$, $\lambda \in \R\cup\{\infty\}$, depending on $(C_s^i)_{i=1,\ldots,k, \,0\leq s \leq t_i}$.
\end{sk-proof}

\begin{remark}\label{remark casimirs}
{\rm If, as it frequently happens in the typical situation and applications, under some technical assumptions, each hierarchy $(C_s^i)_{s\in \N}$, $i=1,\ldots,k$, starts from a Casimir invariant $C_0^i$ of $\Pi_0$ and terminates with a Casimir function $C_{r_i}^i$ of $\Pi_1$, then the polynomials $C^i(\lambda) = C_0^i\lambda^{r_i} + C_1^i\lambda^{r_i-1} +\cdots + C_{r_i-1}^i\lambda + C_{r_i}^i$, $i=1,\ldots,k$, whose degrees $r_i$, $i=1,\ldots,k$, satisfy the relation $r_1+\cdots +r_k = r$, are Casimir functions of $\{\cdot,\cdot\}_{(\lambda)}$ and any other Casimir invariant of the Poisson pencil $\{\cdot,\cdot\}_{(\lambda)}$ is functionally generated by $C^1(\lambda), \ldots, C^k(\lambda)$. Notice that polynomial Casimirs of degree $0$ are nothing else than common Casimir functions of the brackets $\{\cdot,\cdot\}_0$ and $\{\cdot,\cdot\}_1$. Hence, for a Poisson pencil $(\Pi_{(\lambda)})_{\lambda \in \R\cup \{\infty\}}$ of corank $k$ almost everywhere on a $2r+k$--dimensional manifold and for almost any value of $\lambda$, it is not inconceivable to suppose that it possesses $k$ functionally independent polynomial Casimirs with the properties described in this remark.}
\end{remark}

From the above presentation we keep that:
\begin{itemize}
\item[-]
The coefficients of a polynomial Casimir of a Poisson pencil are in bi-involution.
\item[-]
All the coefficients of two polynomial Casimirs of a Poisson pencil are also in bi-involution.
\end{itemize}

\section{Poisson brackets with prescribed involutive family of functions}\label{section-central}
Let $\mathfrak{F}=(f_1,\ldots,f_{r+k})$ be a $r+k$--tuple of smooth functions on a $2r+k$--dimensional smooth manifold $M$, functionally independent almost everywhere. We want to construct Poisson brackets $\{\cdot,\cdot\}$ on $\C$ of rank at most $2r$ with respect to which the elements of $\mathfrak{F}$ are pairwise in involution. Our process for the study of this problem is based on the properties of polynomial Casimir functions of a Poisson pencil presented in paragraph \ref{Casimir of pencil} and on the results of \cite{dam-pet} reviewed in \ref{Poisson given Casimirs}. Our basic idea is to:
\begin{enumerate}
\item
Construct $k$ polynomials of a parameter $\lambda\in \R\cup \{\infty\}$ with coefficients from $\mathfrak{F}$. For this, we rename the elements of the collection $\mathfrak{F}$ and formulate, using every element of $\mathfrak{F}$ only once, $k$ polynomials of the form
\begin{equation*}
F^i(\lambda) =  \lambda^{r_i}f^i_0 + \lambda^{r_i-1}f^i_1 + \ldots + \lambda f^i_{r_i-1} + f^i_{r_i},  \quad i=1, \ldots, k,
\end{equation*}
with $f^i_j \in \mathfrak{F}$ and $\sum_{i=1}^k r_i = r$. We note that polynomials $F^i$ of degree $r_i=0$ are "constant" polynomials, i.e., $F^i(\lambda)=f^i_{r_i}=f^i_0$.
\item
Extend our method of constructing Poisson brackets with given Casimirs \cite{dam-pet} to Poisson pencils. This procedure allows us to deal with compatible Poisson structures in a constructive way which is a problem already posed by J. Grabowski and al. in \cite{Grab93}.
\end{enumerate}
Therefore, our purpose is to build a Poisson pencil $(\Pi_{(\lambda)})_{\lambda \in \R\cup\{\infty\}}$ of rank $2r$, for almost all the values of the parameter $\lambda$ and almost everywhere on $M$, admitting the polynomials $F^1(\lambda), \ldots, F^k(\lambda)$ as polynomial Casimir functions. As we have noted in Remark \ref{remark casimirs}, polynomials of degree $0$ will be common Casimir functions of the brackets $\{\cdot,\cdot\}_{(\lambda)}$, $\lambda\in \R\cup\{\infty\}$. As in \cite{dam-pet}, we begin by discussing the problem on even-dimensional manifolds and in the next subsection we continue by extending the results on odd-dimensional manifolds.

\subsection{On even-dimensional manifolds}\label{even}
We suppose that $\dim M=2r+k$ is even, i.e., $k=2l$ is also even, and, for technical reasons, that $M$ is endowed with a nondegenerate bivector field $\Lambda$ such that
\begin{equation}\label{condition-Lambda-0-1}
\left\{
\begin{array}{l}
F_0 = \langle df_0^1\wedge \ldots \wedge df_0^k, \, \displaystyle{\frac{\Lambda^l}{l!}}\rangle = \langle \frac{\omega^l}{l!},\, X_{f_0^1}\wedge \ldots \wedge X_{f_0^k}\rangle \neq 0,  \\
\\
F_r = \langle df_{r_1}^1\wedge \ldots \wedge df_{r_k}^k, \, \displaystyle{\frac{\Lambda^l}{l!}}\rangle = \langle \frac{\omega^l}{l!},\, X_{f_{r_1}^1}\wedge \ldots \wedge X_{f_{r_k}^k}\rangle \neq 0
\end{array}
\right.
\end{equation}
are verified on an open and dense subset $\mathcal{U}$ of $M$.\footnote{We can easily prove that such a structure always existe at least locally.} In (\ref{condition-Lambda-0-1}), $\omega$ is the nondegenerate $2$-form on $M$ defined by $\Lambda$ and $X_{f^i_j}=\Lambda^\#(df_j^i)$, $i=1,\ldots, k$, $j=1, \ldots, r_i$, are the vector fields on $M$ associated to $f^i_j$ via $\Lambda^\#$. Then the relation
\begin{equation*}
F(\lambda) = \langle dF^1(\lambda)\wedge \ldots \wedge dF^k(\lambda), \, \frac{\Lambda^l}{l!}\rangle = \langle \frac{\omega^l}{l!},\, X_{F^1(\lambda)}\wedge \ldots \wedge X_{F^k(\lambda)} \rangle \neq 0,
\end{equation*}
where $X_{F^i(\lambda)}=\Lambda^\#(dF^i(\lambda))$, $i=1,\ldots,k$, also holds on $\mathcal{U}$, for every $\lambda \in \R\cup\{\infty\}$, because
\begin{equation*}
dF^1(\lambda)\wedge \ldots \wedge dF^k(\lambda) = \lambda^rdf_0^1\wedge \ldots \wedge df_0^k + \ldots + \ldots + df_{r_1}^1\wedge \ldots \wedge df_{r_k}^k
\end{equation*}
and $F(\lambda) = \lambda^rF_0 + \ldots + F_r$ with $F_0\neq 0$, $F_r\neq 0$ on $\mathcal{U}$.

\vspace{2mm}

Thereafter, we consider the distributions $D_0 = \langle X_{f_0^1}, \ldots, X_{f_0^k} \rangle$, $D_1= \langle X_{f_{r_1}^1}, \ldots, X_{f_{r_k}^k} \rangle$ and $D_{(\lambda)} = \langle X_{F^1(\lambda)}, \ldots , X_{F^k(\lambda)}\rangle$ on $M$, of rank at most $k$, their annihilators $D_0^\circ$, $D_1^\circ$ and $D_{(\lambda)}^\circ$, and their orthogonal distributions with respect to $\omega$ $\mathrm{orth}_\omega D_0$, $\mathrm{orth}_\omega D_1$ and $\mathrm{orth}_\omega D_{(\lambda)}$.{\footnote{We note that $D_0 \neq D_{(0)}$ and $D_1 \neq D_{(1)}$. Precisely, $D_{(0)} = D_1$.}} Since $\det(\{f_0^i, f_0^j\}_{\omega})=F_0^2 \neq 0$, $\det(\{f_{r_i}^i, f_{r_j}^j\}_{\omega})=F_r^2 \neq 0$ and $\det(\{F^i(\lambda), F^j(\lambda)\}_{\omega})=F(\lambda)^2 \neq 0$ on $\mathcal{U}$, the spaces $D_{0x}=D_0\cap T_xM$, $D_{1x}=D_1\cap T_xM$ and $D_{(\lambda)x}=D_{(\lambda)}\cap T_xM$ are symplectic subspaces of $(T_xM,\omega_x)$ at each point $x\in \mathcal{U}$. Thus,
\begin{equation*}
T_xM = D_{0x} \oplus \mathrm{orth}_{\omega_x} D_{0x} = D_{0x} \oplus \Lambda_x^\#(D_{0x}^\circ),
\end{equation*}
\begin{equation*}
T_xM = D_{1x} \oplus \mathrm{orth}_{\omega_x} D_{1x} = D_{1x} \oplus \Lambda_x^\#(D_{1x}^\circ)
\end{equation*}
and
\begin{equation*}
T_xM = D_{(\lambda)x} \oplus \mathrm{orth}_{\omega_x} D_{(\lambda)x} = D_{(\lambda)x} \oplus \Lambda_x^\#(D_{(\lambda)x}^\circ),
\end{equation*}
where $D_{0x}^\circ = D_0^\circ \cap T_x^\ast M$, $D_{1x}^\circ = D_1^\circ \cap T_x^\ast M$ and $D_{(\lambda)x}^\circ = D_{(\lambda)}^\circ \cap T_x^\ast M$. Also,
\begin{equation*}
T_x^\ast M = D_{0x}^\circ  \oplus (\Lambda_x^\#(D_{0x}^\circ))^\circ = D_{0x}^\circ \oplus \langle df_0^1, \ldots, df_0^k\rangle_x,
\end{equation*}
\begin{equation*}
T_x^\ast M = D_{1x}^\circ  \oplus (\Lambda_x^\#(D_{1x}^\circ))^\circ = D_{1x}^\circ \oplus \langle df_{r_1}^1, \ldots, df_{r_k}^k\rangle_x
\end{equation*}
and
\begin{equation*}
T_x^\ast M = D_{(\lambda)x}^\circ  \oplus (\Lambda_x^\#(D_{(\lambda)x}^\circ))^\circ = D_{(\lambda)x}^\circ \oplus \langle dF^1(\lambda), \ldots, dF^k(\lambda)\rangle_x.
\end{equation*}

\vspace{2mm}

We proceed by choosing on $(M,\Lambda)$ a smooth section $\sigma_0$ of $\bigwedge^2D_0^\circ$ of maximal rank on $\mathcal{U}$ and a smooth section $\sigma_1$ of $\bigwedge^2D_1^\circ$ of maximal rank on $\mathcal{U}$ also, verifying the following conditions
\begin{equation}\label{sigma}
\left\{
\begin{array}{l}
\delta(\sigma_0\wedge \sigma_0) = 2\sigma_0\wedge\delta(\sigma_0),  \\
\\
\delta(\sigma_1\wedge \sigma_1) = 2\sigma_1\wedge\delta(\sigma_1),     \\
\\
\delta(\sigma_0\wedge \sigma_1) = \delta(\sigma_0)\wedge\sigma_1 + \sigma_0\wedge\delta(\sigma_1)
\end{array}
\right.
\end{equation}
and, for any $i=1,\ldots,k$ and $j=1,\ldots,r_i$, the recursion relations
\begin{equation}\label{recursion - sigma - X}
\sigma_0(X_{f^i_{j}}, \cdot) = \sigma_1(X_{f^i_{j-1}}, \cdot).
\end{equation}
As in Proposition \ref{prop-Pi-sigma}, $\delta$ is the codifferential operator on $\Omega(M)$ defined by $\omega$ and the isomorphism $\Lambda^\# : \Omega^p(M) \to \mathcal{V}^p(M)$, $p\in \N$. We then define on $M$ the $1$-parameter family of $2$-forms
\begin{equation*}
\sigma_{(\lambda)} = \sigma_1-\lambda \sigma_0, \quad \quad \lambda \in \R\cup\{\infty\},
\end{equation*}
and prove:

\begin{theorem}\label{central-theorem}
Under the above assumptions, the following statements hold.
\begin{enumerate}
\item
The bivector field $\Pi_0 = \Lambda^\#(\sigma_0)$ on $(M,\Lambda)$ is a Poisson tensor of rank at most $2r$ on $M$ admitting as unique Casimir invariants the functions $f_0^1,\ldots,f_0^k$.\footnote{In the paper, the statement \emph{"the functions are the unique Casimir invariants of a Poisson structure $\Pi$"} means that the algebra of Casimir functions of $\Pi$ is generated by these functions.}
\item
The bivector field $\Pi_1 = \Lambda^\#(\sigma_1)$ on $(M,\Lambda)$ is a Poisson tensor of rank at most $2r$ on $M$ admitting as unique Casimir invariants the functions $f_{r_1}^1,\ldots,f_{r_k}^k$.
\item
The $2$-forms $\sigma_{(\lambda)} = \sigma_1-\lambda \sigma_0$, $\lambda \in \R\cup\{\infty\}$, are smooth sections of $\bigwedge^2D_{(\lambda)}^\circ$ of maximal rank on $\mathcal{U}$ and the bivector fields $\Pi_{(\lambda)}= \Lambda^\#(\sigma_{(\lambda)})$ on $(M,\Lambda)$ define a Poisson pencil of rank at most $2r$ on $M$ admitting as unique polynomial Casimir invariants the polynomials $F^1(\lambda), \ldots, F^k(\lambda)$. The Poisson bracket $\{\cdot, \cdot\}_{(\lambda)}$, $\lambda \in \R\cup \{\infty\}$, on $\C$ associated to $\Pi_{(\lambda)}$ is given by the formula
\begin{equation}\label{bracket-lambda-even}
\{\cdot , \cdot\}_{(\lambda)} \Omega = -\frac{1}{F(\lambda)}d\cdot \wedge\, d\cdot \wedge\big(\sigma_{(\lambda)} + \frac{g_{(\lambda)}}{r-1}\omega\big) \wedge \frac{\omega^{r-2}}{(r-2)!}\wedge dF^1(\lambda)\wedge\ldots \wedge dF^k(\lambda),
\end{equation}
where $\Omega$ is the volume form on $M$ defined by $\omega$ and $g_{(\lambda)} = i_{\Lambda}\sigma_{(\lambda)}$.
\item
The functions of the family $\mathfrak{F}=(f^1_0,\ldots,f^1_{r_1},\ldots, f^k_0,\ldots, f^k_{r_k})$ are in involution with respect to any Poisson bracket $\{\cdot, \cdot\}_{(\lambda)}$, $\lambda \in \R\cup\{\infty\}$.
\end{enumerate}
\end{theorem}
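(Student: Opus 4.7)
The plan is to reduce everything to the single-bracket construction of \cite{dam-pet} reviewed in subsection~\ref{Poisson given Casimirs}, together with a compatibility check for the pencil and the bi-involution consequence of the Lenard--Magri theory recalled in subsection~\ref{Casimir of pencil}.

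Parts (1) and (2) I would argue in parallel. Since $\sigma_0$ is a section of $\bigwedge^2 D_0^\circ$, we have $i_{X_{f_0^j}}\sigma_0 = 0$ for every $j$; combined with the identity $\Pi_0(\alpha,\beta) = -\sigma_0(\Lambda^\#\alpha,\Lambda^\#\beta)$ that follows from the extension rule (\ref{def-extension P}), this gives $\Pi_0^\#(df_0^j) = 0$, so each $f_0^j$ is a Casimir. The first equation of (\ref{sigma}) is exactly the hypothesis of Proposition~\ref{prop-Pi-sigma}, so $\Pi_0$ is Poisson; the rank bound $2r$ is immediate from $\mathrm{rk}\,D_0^\circ = 2r$, and applying Theorem~3.3 of \cite{dam-pet} to $\Pi_0$ produces the explicit formula and confirms that $f_0^1,\ldots,f_0^k$ generate the full Casimir algebra. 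The same argument with $\sigma_0,f_0^j$ replaced by $\sigma_1,f_{r_j}^j$ handles $\Pi_1$.

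Part (3) is the heart of the proof. I would first verify that $\sigma_{(\lambda)}$ annihilates $D_{(\lambda)}$: writing $X_{F^i(\lambda)} = \sum_{j=0}^{r_i}\lambda^{r_i-j}X_{f_j^i}$ and expanding $\sigma_{(\lambda)}(X_{F^i(\lambda)},\cdot)$, the endpoint terms $j=0$ and $j=r_i$ vanish because $\sigma_0\in\bigwedge^2 D_0^\circ$ and $\sigma_1\in\bigwedge^2 D_1^\circ$, while all intermediate terms cancel telescopically thanks to the recursion (\ref{recursion - sigma - X}). Next, I would expand $\sigma_{(\lambda)}\wedge\sigma_{(\lambda)}$ and $\sigma_{(\lambda)}\wedge\delta\sigma_{(\lambda)}$ as quadratic polynomials in $\lambda$: matching coefficients of $\lambda^0,\lambda^1,\lambda^2$ reduces the hypothesis of Proposition~\ref{prop-Pi-sigma} applied to $\Pi_{(\lambda)}$ to precisely the three identities of (\ref{sigma}), the mixed identity being exactly what makes the $\lambda^1$--coefficient match. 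Thus every $\Pi_{(\lambda)} = \Pi_1 - \lambda\Pi_0$ is Poisson, which gives both the Poisson property of $\Pi_0,\Pi_1$ (set $\lambda=0,\infty$) and their compatibility. The annihilation property above then shows that the $F^i(\lambda)$ are Casimirs of $\Pi_{(\lambda)}$; formula (\ref{bracket-lambda-even}) itself follows by applying Theorem~3.3 of \cite{dam-pet} to the Poisson tensor $\Pi_{(\lambda)}$ with prescribed Casimirs $F^1(\lambda),\ldots,F^k(\lambda)$, noting that $F(\lambda) = \lambda^r F_0 + \cdots + F_r$ is nonvanishing on $\mathcal{U}$ by assumption (\ref{condition-Lambda-0-1}), and the uniqueness of the polynomial Casimirs is inherited from the uniqueness in that theorem.

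Part (4) then comes almost for free: by part~(3), each $F^i(\lambda)$ is a polynomial Casimir of the pencil, so its coefficients $(f_0^i,f_1^i,\ldots,f_{r_i}^i)$ satisfy the Lenard--Magri relations (\ref{Lenard-Magri-1}) by the polynomial version of Corollary~\ref{Corollary-coefficient-Casimir} discussed in Remark~\ref{remark casimirs}. Propositions~\ref{hierarchy 1} and~\ref{hierarchy 2} applied to these hierarchies then give the bi-involution of every pair of elements of $\mathfrak{F}$ with respect to both $\Pi_0$ and $\Pi_1$, hence with respect to every $\Pi_{(\lambda)}$ by linearity. The main obstacle I anticipate is the parametric expansion in part~(3): the three conditions of (\ref{sigma}) and the recursion (\ref{recursion - sigma - X}) must be organized so that the coefficients of $\lambda^0,\lambda^1,\lambda^2$ in $\delta(\sigma_{(\lambda)}\wedge\sigma_{(\lambda)}) - 2\sigma_{(\lambda)}\wedge\delta\sigma_{(\lambda)}$ vanish simultaneously; once this bookkeeping is in place, the remainder of the argument is essentially a parameter-carrying version of the proof in \cite{dam-pet}.
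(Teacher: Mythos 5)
Your proposal is correct and follows essentially the same route as the paper: the telescoping cancellation via the recursion relations (\ref{recursion - sigma - X}) to show $\sigma_{(\lambda)}\in\bigwedge^2D_{(\lambda)}^\circ$, the expansion of the condition of Proposition \ref{prop-Pi-sigma} in powers of $\lambda$ to reduce it to the three identities (\ref{sigma}), the appeal to Theorem 3.3 of \cite{dam-pet} for formula (\ref{bracket-lambda-even}), and Corollary \ref{Corollary-coefficient-Casimir} together with Propositions \ref{hierarchy 1} and \ref{hierarchy 2} for the bi-involution. The only cosmetic deviations are a sign slip in the identity $\Pi_0(\alpha,\beta)=\sigma_0(\Lambda^\#\alpha,\Lambda^\#\beta)$ (the paper's convention (\ref{def-extension P}) gives $(-1)^2=+1$, which is immaterial to the vanishing argument) and your invoking \cite{dam-pet} for the uniqueness of the Casimirs where the paper argues directly that any Casimir $f$ forces $X_f$ to be a section of $D_0$.
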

\begin{proof}
According to Proposition \ref{prop-Pi-sigma}, $\Pi_0 = \Lambda^\#(\sigma_0)$ is Poisson because $\sigma_0$ verifies the first condition of (\ref{sigma}). By construction, $f_0^1,\ldots,f_0^k$ are the unique Casimir invariants of $\Pi_0$. Effectively, for any $i=1,\ldots,k$,
\begin{equation*}
\Pi_0^\#(df_0^i, \cdot) = \Lambda^\#(\sigma_0)(df_0^i, \cdot) = \sigma_0(\Lambda^\#(df_0^i), \Lambda^\#(\cdot)) = \sigma_0(X_{f_0^i}, \Lambda^\#(\cdot))=0,
\end{equation*}
since $\sigma_0$ and $X_{f_0^i}$ are sections of $\bigwedge^2D_0^\circ$ and $D_0$, respectively. If $f$ is any other Casimir of $\Pi_0$, then $X_f = \Lambda^\#(df)$ must be a section of $D_0$, fact which implies that $f$ depends functionally on $f_0^1,\ldots,f_0^k$. Thus, $f_0^1,\ldots,f_0^k$ are the generators of the algebra of Casimirs of $\Pi_0$ and the first assertion of the Theorem is proved. With a completely analogous argumentation,
we establish also the second assertion.

In order to show the third statement, we remark that, for any $i=1,\ldots, k$,
\begin{eqnarray*}
X_{F^i(\lambda)} & = & \Lambda^\#(dF^i(\lambda)) = \lambda^{r_i}\Lambda^\#(df_0^i) + \lambda^{r_i-1}\Lambda^\#(df_1^i) + \ldots + \lambda\Lambda^\#(df_{r_i-1}^i) + \Lambda^\#(df_{r_i}^i) \\
& = & \lambda^{r_i}X_{f_0^i} + \lambda^{r_i-1}X_{f_1^i} + \ldots + \lambda X_{f_{r_i-1}^i} + X_{f_{r_i}^i}
\end{eqnarray*}
and
\begin{eqnarray}\label{sigma-X-lambda}
\sigma_{(\lambda)}(X_{F^i(\lambda)}, \cdot ) & = & \lambda^{r_i}\sigma_1(X_{f_0^i},\cdot) + \lambda^{r_i-1}\sigma_1(X_{f_1^i}, \cdot) + \ldots + \lambda \sigma_1(X_{f_{r_i-1}^i},\cdot) + \sigma_1(X_{f_{r_i}^i},\cdot) \nonumber \\
& & - \lambda^{r_i+1}\sigma_0(X_{f_0^i},\cdot) - \lambda^{r_i}\sigma_0(X_{f_1^i}, \cdot) - \ldots - \lambda^2 \sigma_0(X_{f_{r_i-1}^i},\cdot) -\lambda \sigma_0(X_{f_{r_i}^i},\cdot) \nonumber \\
& = & - \lambda^{r_i+1}\sigma_0(X_{f_0^i},\cdot) + \lambda^{r_i}(\sigma_1(X_{f_0^i},\cdot) - \sigma_0(X_{f_1^i}, \cdot)) + \ldots \nonumber \\
& & + \lambda (\sigma_1(X_{f_{r_i-1}^i},\cdot)-\sigma_0(X_{f_{r_i}^i},\cdot) ) + \sigma_1(X_{f_{r_i}^i},\cdot) \nonumber \\
& = & 0.
\end{eqnarray}
The last equality is true because the recursion relations (\ref{recursion - sigma - X}) hold and, $\sigma_0$ and $\sigma_1$ are assumed to be sections of $\bigwedge^2D_0^\circ$ and $\bigwedge^2D_1^\circ$, respectively.
The above result indicates that $\sigma_{(\lambda)}$ is a section of $\bigwedge^2D_{(\lambda)}^\circ$, while conditions (\ref{sigma}), which are true by our assumptions, assure us that, for any $\lambda\in \R\cup\{\infty\}$, $\Pi_{(\lambda)}= \Lambda^\#(\sigma_{(\lambda)}) =\Lambda^\#(\sigma_1 - \lambda \sigma_0)=\Lambda^\#(\sigma_1) - \lambda \Lambda^\#(\sigma_0)=\Pi_1 -\lambda \Pi_0$ is a Poisson tensor on $M$. In fact,
\begin{eqnarray*}
\Pi_{(\lambda)} \; \mathrm{is}\;\mathrm{Poisson} & \Leftrightarrow & \delta(\sigma_{(\lambda)}\wedge \sigma_{(\lambda)}) = 2\sigma_{(\lambda)}\wedge \delta(\sigma_{(\lambda)}) \\
& \Leftrightarrow & \delta(\sigma_1\wedge \sigma_1) - 2\lambda \delta(\sigma_0\wedge\sigma_1) + \lambda^2\delta(\sigma_0\wedge\sigma_0) = \\
& & 2\sigma_1\wedge \delta(\sigma_1)-2\lambda(\sigma_0\wedge\delta(\sigma_1) + \delta(\sigma_0)\wedge\sigma_1) + 2\lambda^2\sigma_0\wedge\delta(\sigma_0) \\
& \Leftrightarrow & (\ref{sigma}) \; \mathrm{hold}.
\end{eqnarray*}
The compatibility condition $[\Pi_0,\Pi_1]=0$ of $\Pi_0$ with $\Pi_1$ is equivalent to the third relation of (\ref{sigma}). Moreover, equation (\ref{sigma-X-lambda}) implies that $F^1(\lambda), \ldots, F^k(\lambda)$ are polynomial Casimirs functions of the Poisson pencil $\Pi_{(\lambda)} = \Pi_1 - \lambda \Pi_0$, $\lambda\in \R\cup\{\infty\}$, since
\begin{equation*}
\Pi_{(\lambda)}(dF^i(\lambda), \cdot) = \Lambda^\#(\sigma_{(\lambda)})(dF^i(\lambda), \cdot) = \sigma_{(\lambda)}(\Lambda^\#(dF^i(\lambda)), \Lambda^\#(\cdot)) = \sigma_{(\lambda)}(X_{F^i(\lambda)}, \cdot )\stackrel{(\ref{sigma-X-lambda})}{=}0.
\end{equation*}
As for the first assertion, we prove that any other polynomial Casimir function of $\Pi_{(\lambda)}$ depends functionally on $F^1(\lambda), \ldots, F^k(\lambda)$, thus $\Pi_{(\lambda)}$ is of rank at most $2r$ on $M$. Applying the results of \cite{dam-pet}, presented in paragraph \ref{Poisson given Casimirs}, to the family of functions $(F^1(\lambda),\ldots, F^k(\lambda))$, we conclude that the brackets $\{\cdot,\cdot\}_{(\lambda)}$ can be written as in formula (\ref{bracket-lambda-even}).

Finally, taking into account Corollary \ref{Corollary-coefficient-Casimir} and the fact that $f^i_0$, $i=1,\ldots,k$, are Casimirs of $\{\cdot,\cdot\}_0$, we obtain that the coefficients $f^i_0,\ldots,f^i_{r_i}$ of each polynomial Casimir $F^i(\lambda)$ of $\Pi_{(\lambda)}$, $i=1,\ldots,k$, constitute a bi-Hamiltonian hierarchy. By Proposition \ref{hierarchy 2}, we then get that the functions of the family $\mathfrak{F}=(f^1_0,\ldots,f^1_{r_1},\ldots, f^k_0,\ldots, f^k_{r_k})$ are in bi-involution with respect to $\{\cdot,\cdot\}_0$ and $\{\cdot,\cdot\}_1$. Thus, the fourth statement of Theorem is proved.
\end{proof}

\subsection{On odd-dimensional manifolds}\label{odd}
In the case where $\dim M = 2r+k$ is odd, i.e., $k=2l+1$, we remark that any Poisson bracket $\{\cdot,\cdot\}$ on $\C$ having the elements of $\mathfrak{F}$ in involution can be considered as a Poisson bracket on $C^\infty(M')$, $M'=M\times \R$, with respect to which the functions of $\mathfrak{F}'=\mathfrak{F}\cup \{s\}$ pairwise commute. The function $s$ is the canonical coordinate on the factor $\R$ and, in this setting, is also a Casimir invariant of $\{\cdot,\cdot\}$. The converse statement is also true. Thus, the problem of constructing Poisson brackets $\{\cdot,\cdot\}$ on $\C$ for which $\mathfrak{F}$ is involutive, is equivalent to that of constructing Poisson brackets $\{\cdot,\cdot\}$ on $C^\infty(M')$ for which $\mathfrak{F}'$ is involutive and, in particular, $s$ is a Casimir; a framework that was completely studied in the previous subsection. In what follows and using the results of Section \ref{even}, our aim is to establish Poisson pencils on $M$ having the elements of $\mathfrak{F}$ in involution and a formula analogous to (\ref{bracket-lambda-even})
for their brackets. For this, we need to adapt the technique of the proof of Theorem \ref{central-theorem} to the situation described above. The latter can be realized by choosing, for the role of $(\omega, \Lambda)$ on $M'$, an (almost) symplectic form $\omega'$ and its associated bivector field $\Lambda'$ defined by an (almost) cosymplectic structure $(\vartheta, \Theta)$ on $M$. Before we proceed, let us recall the notion of \emph{almost cosymplectic} structure on an odd-dimensional manifold and some of their properties \cite{lib1}, \cite{lch2}.

\vspace{2mm}

An \emph{almost cosymplectic} structure on a smooth odd-dimensional manifold $M$, $\dim M = 2n+1$, is defined by a pair $(\vartheta,\Theta)\in \Omega^1(M)\times \Omega^2(M)$ such that $\Omega=\vartheta \wedge \Theta^n$ is a volume form on $M$, fact that yields that $\Theta$ is of constant rank $2n$ on $M$. Thus, $\ker \vartheta$ and $\ker \Theta$ are complementary subbundles of $TM$ called, respectively, the \emph{horizontal bundle} and the \emph{vertical bundle}, and their annihilators are complementery subbundles of $T^\ast M$. Moreover, it is well known \cite{lch2} that $(\vartheta,\Theta)$ gives rise to a transitive \emph{almost Jacobi} structure $(\Lambda,E) \in \mathcal{V}^2(M)\times \mathcal{V}^1(M)$ on $M$ such that
\begin{equation*}
i_{E}\vartheta = 1 \quad  \mathrm{and}  \quad i_{E}\Theta = 0,
\end{equation*}
\begin{equation*}
\Lambda^\#(\vartheta) = 0 \quad \mathrm{and} \quad i_{\Lambda^\#(\zeta)}\Theta = -(\zeta - \langle \zeta, E\rangle \vartheta), \quad \mathrm{for}\;\;\mathrm{all}\;\; \zeta \in \Omega^1(M),
\end{equation*}
and conversely. We have, $\ker \vartheta = \mathrm{Im}\Lambda^\#$ and $\ker \Theta = \langle E\rangle$. So, $TM =\mathrm{Im}\Lambda^\# \oplus \langle E\rangle$ and $T^\ast M = \langle E\rangle^\circ \oplus \langle \vartheta\rangle$. The sections of $\langle E\rangle^\circ$ are called \emph{semi-basic} forms and $\Lambda^\#$ is an isomorphism from the $\C$-module of semi-basic $1$-forms to  the $\C$-module of horizontal vector fields. This isomorphism can be  extended, as in (\ref{def-extension P}), to an isomorphism, also denoted by $\Lambda^\#$, from the $\C$-module of semi-basic $p$-forms on the $\C$-module of horizontal $p$-vector fields. Furthermore, we note that $(\vartheta,\Theta)$ defines on $M'=M\times \R$ an almost symplectic structure $\omega' = \Theta + ds \wedge \vartheta$ whose corresponding nondegenerate bivector field is $\Lambda' = \Lambda + \displaystyle{\frac{\partial}{\partial s}}\wedge E$ and it determines, as in (\ref{def-extension P}), an isomorphism $\Lambda'\,^\#$ from the space $\Omega^p(M')$ of smooth $p$-forms on $M'$ to the space $\mathcal{V}^p(M')$ of smooth $p$-vector fields on $M'$. Thus, any bivector field $\Pi'$ on $M'$ can be viewed as the image by $\Lambda'\,^\#$ of a $2$-form $\sigma'$ on $M'$.
In particular, any bivector field $\Pi$ on $M$, which is viewed as a bivector field on $M'$ independent of $s$ and without a term of type $X\wedge \frac{\partial}{\partial s}$, i.e., $ds$ belongs to the kernel of $\Pi^\#$, is written on $M'$ as $\Pi =\Lambda'\,^\#(\sigma')$ with $\sigma'$ a $2$-form on $M'$ of type
\begin{equation}\label{type sigma'}
\sigma' = \sigma + \tau \wedge ds,
\end{equation}
where $\sigma$ and $\tau$ are, respectively, a semi-basic $2$-form and a semi-basic $1$-form on $M$.

\vspace{2mm}

Now, we consider on $(M',\mathfrak{F}')$ the polynomials
\begin{equation*}
F^i(\lambda) =  \lambda^{r_i}f^i_0 + \lambda^{r_i-1}f^i_1 + \ldots + \lambda f^i_{r_i-1} + f^i_{r_i},  \quad i=1, \ldots, k,
\end{equation*}
with $\sum_{i=1}^k r_i = r$, and \begin{equation*}
F^{k+1}(\lambda)=s, \end{equation*}
where the function $s$ has, simultaneously, the role of $f^{k+1}_0$ and that of $f^{k+1}_{r_{k+1}}$ with $r_{k+1}=0$. On the other hand, we consider on $M$ an almost Jacobi structure $(\Lambda,E)$ such that
\begin{equation*}
F_0=\langle df^1_0\wedge \ldots \wedge df^k_0, \, E\wedge \frac{\Lambda^l}{l!}\rangle \neq 0 \quad \mathrm{and} \quad F_r=\langle df^1_{r_1}\wedge \ldots \wedge df^k_{r_k}, \, E\wedge \frac{\Lambda^l}{l!}\rangle \neq 0
\end{equation*}
on an open and dense subset $\mathcal{U}$ of $M$.{\footnote{We note that, as in the case of even-dimensional manifolds, such a choise of $(\Lambda, E)$ on $M$ is always possible at least locally.}} Hence, the relation
\begin{eqnarray*}
F(\lambda) & = & \langle dF^1(\lambda)\wedge \ldots \wedge dF^k(\lambda), \, E\wedge \frac{\Lambda^l}{l!} \rangle    \nonumber \\
& = & \langle \lambda^r df^1_0\wedge \ldots \wedge df^k_0 + \ldots +  df^1_{r_1}\wedge \ldots \wedge df^k_{r_k}, \, E\wedge \frac{\Lambda^l}{l!} \rangle \nonumber \\
& = & \lambda^r F_0 + \ldots + F_r \neq 0
\end{eqnarray*}
also holds on $\mathcal{U}$. Let $(\vartheta, \Theta)$ be the almost cosymplectic structure on $M$ associated to $(\Lambda,E)$ and $\Lambda'=\Lambda + \frac{\partial}{\partial s}\wedge E$
the corresponding almost Poisson tensor on $M'$ with almost symplectic form $\omega'=\Theta + ds\wedge\vartheta$. Since, for any $m=1,\ldots,r+l+1$,
\begin{equation*}\label{volume'}
\frac{\omega'\,^m}{m!} = \frac{\Theta^m}{m!} + ds\wedge \vartheta \wedge \frac{\Theta^{m-1}}{(m-1)!} \quad \mathrm{and} \quad \frac{\Lambda'\,^m}{m!} = \frac{\Lambda^m}{m!} +
\frac{\partial}{\partial s}\wedge E \wedge \frac{\Lambda^{m-1}}{(m-1)!},
\end{equation*}
it is clear that
\begin{eqnarray*}\label{F_0-odd-even}
\lefteqn{\langle df^1_0\wedge \ldots \wedge df_0^k\wedge ds,\; \frac{\Lambda'\,^{l+1}}{(l+1)!} \rangle }\nonumber \\
& = & \langle df_0^1\wedge \ldots \wedge df_0^k\wedge ds,\; \frac{\Lambda^{l+1}}{(l+1)!} + \frac{\partial}{\partial s}\wedge E \wedge \frac{\Lambda^{l}}{l!}\rangle \nonumber \\
& = & \langle df_0^1\wedge \ldots \wedge df_{0}^k\wedge ds,\; \frac{\partial}{\partial s}\wedge E \wedge \frac{\Lambda^{l}}{l!}\rangle = - F_0 \neq 0
\end{eqnarray*}
and
\begin{eqnarray*}\label{F_r-odd-even}
\lefteqn{\langle df^1_{r_1}\wedge \ldots \wedge df_{r_k}^k\wedge ds,\; \frac{\Lambda'\,^{l+1}}{(l+1)!} \rangle }\nonumber \\
& = & \langle df_{r_1}^1\wedge \ldots \wedge df_{r_k}^k\wedge ds,\; \frac{\Lambda^{l+1}}{(l+1)!} + \frac{\partial}{\partial s}\wedge E \wedge \frac{\Lambda^{l}}{l!}\rangle \nonumber \\
& = & \langle df_{r_1}^1\wedge \ldots \wedge df_{r_k}^k\wedge ds,\; \frac{\partial}{\partial s}\wedge E \wedge \frac{\Lambda^{l}}{l!}\rangle = - F_r \neq 0
\end{eqnarray*}
on the open and dense subset $\mathcal{U}'=\mathcal{U} \times \R$ of $M'=M\times \R$. Consequently,
\begin{eqnarray*}
\lefteqn{\langle dF^1(\lambda)\wedge \ldots \wedge dF^k(\lambda)\wedge dF^{k+1}(\lambda), \, \frac{\Lambda'\,^{l+1}}{(l+1)!}\rangle } \\
& = & \langle \lambda^r df^1_0\wedge \ldots \wedge df_0^k\wedge ds +\,\ldots\,+ df^1_{r_1}\wedge \ldots \wedge df_{r_k}^k\wedge ds, \; \frac{\Lambda'\,^{l+1}}{(l+1)!}\rangle \\
& = & -\lambda^rF_0 - \ldots - F_r = -F(\lambda) \neq 0
\end{eqnarray*}
on $\mathcal{U}'$, also. Therefore, the distributions $D_0'=\langle X'_{f_0^1}, \ldots, X'_{f_0^{k+1}}\rangle$, $D_1'= \langle X'_{f_{r_1}^1}, \ldots, X'_{f_{r_{k+1}}^{k+1}}\rangle$ and $D_{(\lambda)}' = \langle X'_{F^1(\lambda)}, \ldots, X'_{F^{k+1}(\lambda)}\rangle$ of rank $k+1 = 2l+2$ on $M'$ generated, respectively, by the families of vector fields
\begin{equation*}
X'_{f^i_0} = \Lambda'\,^\#(df^i_0) = \Lambda^\#(df^i_0) - \langle df^i_0, E\rangle\frac{\partial}{\partial s}, \; i=1,\ldots,k, \;\; \mathrm{and}  \;\; X'_{f^{k+1}_0}=\Lambda'\,^\#(ds)=E,
\end{equation*}
\begin{equation*}
X'_{f^i_{r_i}} = \Lambda'\,^\#(df^i_{r_i}) = \Lambda^\#(df^i_{r_i}) - \langle df^i_{r_i}, E \rangle\frac{\partial}{\partial s}, \; i=1,\ldots,k, \;\; \mathrm{and}  \;\;  X'_{f^{k+1}_{r_{k+1}}}=\Lambda'\,^\#(ds)=E,
\end{equation*}
and
\begin{equation*}
X'_{F^i(\lambda)} = \Lambda'\,^\#(dF^i(\lambda)) = \Lambda^\#(dF^i(\lambda)) - \langle dF^i(\lambda), E \rangle\frac{\partial}{\partial s}, \; i=1,\ldots,k, \;\; \mathrm{and} \;\; X'_{F^{k+1}(\lambda)}=E,
\end{equation*}
have on $\mathcal{U}'$ all the "good" properties of the distributions $D_0$, $D_1$ and $D_{(\lambda)}$ in the setting of Theorem \ref{central-theorem}.

Let $D_0'^\circ$, $D_1'^\circ$ and $D_{(\lambda)}'^\circ$ be the annihilators of $D_0'$, $D_1'$ and $D_{(\lambda)}'$, respectively. We choose a section $\sigma_0'$ of $\bigwedge^2 D_0'^\circ$
and a section $\sigma_1'$ of $\bigwedge^2 D_1'^\circ$, both of maximal rank $2r$ on $\mathcal{U}'$ and of type (\ref{type sigma'}), i.e.,
\begin{equation*}
\sigma_0' = \sigma_0 + \tau_0 \wedge ds \quad \mathrm{and}    \quad  \sigma_1' = \sigma_1 + \tau_1 \wedge ds,
\end{equation*}
with $\sigma_0$, $\sigma_1$ are semi-basic $2$-forms on $M$ and $\tau_0$, $\tau_1$ are semi-basic $1$-forms on $M$, satisfying the conditions
\begin{equation}\label{sigma'}
\left\{
\begin{array}{l}
\delta'(\sigma'_0\wedge \sigma'_0) = 2\sigma'_0\wedge\delta'(\sigma'_0),  \\
\\
\delta'(\sigma'_1\wedge \sigma'_1) = 2\sigma'_1\wedge\delta'(\sigma'_1),     \\
\\
\delta'(\sigma'_0\wedge \sigma'_1) = \delta'(\sigma'_0)\wedge\sigma'_1 + \sigma'_0\wedge\delta'(\sigma'_1)
\end{array}
\right.
\end{equation}
and the recursion relations
\begin{equation}\label{recursion - sigma' - X}
\sigma'_0(X'_{f^i_{j}}, \cdot) = \sigma'_1(X'_{f^i_{j-1}}, \cdot), \quad \; \mathrm{for}\;\mathrm{any}\; i=1,\ldots,k \;\;\mathrm{and}\;\; j=1,\ldots,r_i.
\end{equation}
The operator $\delta'$ is the codifferential operator on $\Omega(M')$ defined by the isomorphism $\Lambda'\,^\#$ (see, Proposition \ref{prop-Pi-sigma}). The fact that $\sigma_0'$ is a section of $\bigwedge^2 D_0'^\circ$ implies that the semi-basic forms $\sigma_0$ and $\tau_0$ on $M$ satisfy the equations
\begin{equation*}
\sigma_0(X_{f^i_0},\cdot) + \langle df^i_0, E\rangle \tau_0 = 0 \;\;\mathrm{and}\;\; \langle \tau_0, X_{f^i_0}\rangle =0, \;\; i=1,\ldots,k,
\end{equation*}
with $X_{f^i_0} = \Lambda^\#(df^i_0)$. Similarly, the fact that $\sigma_1'$ is a section of $\bigwedge^2 D_1'^\circ$ is equivalent to the system
\begin{equation*}
\sigma_1(X_{f^i_{r_i}},\cdot) + \langle df^i_{r_i}, E\rangle \tau_1 = 0 \;\;\mathrm{and}\;\; \langle \tau_1, X_{f^i_{r_i}}\rangle =0, \;\; i=1,\ldots,k,
\end{equation*}
where $X_{f^i_{r_i}} = \Lambda^\#(df^i_{r_i})$, while the recursion relations (\ref{recursion - sigma' - X}) are equivalent to
\begin{equation*}
\sigma_0(X_{f^i_j},\cdot) + \langle df^i_j, E\rangle \tau_0 = \sigma_1(X_{f^i_{j-1}},\cdot) + \langle df^i_{j-1}, E\rangle \tau_1 \;\; \mathrm{and} \;\; \langle \tau_0 ,X_{f^i_j}\rangle = \langle \tau_1 ,X_{f^i_{j-1}}\rangle,
\end{equation*}
for all $i=1,\ldots, k$ and $j=1,\ldots r_i$. Then, we consider on $M'$ the $1$-parameter family of $2$-forms
\begin{equation*}
\sigma'_{(\lambda)} = \sigma_1'-\lambda\sigma_0' = (\sigma_1-\lambda\sigma_0) + (\tau_1 - \lambda\tau_0)\wedge ds, \quad \lambda \in \R\cup\{\infty\},
\end{equation*}
that determines on $M$ the $1$-parameter families of semi-basic $2$-forms $\sigma_{(\lambda)}=\sigma_1-\lambda\sigma_0$ and $1$-forms $\tau_{(\lambda)}=\tau_1-\lambda\tau_0$, $\lambda \in \R\cup\{\infty\}$.

\vspace{2mm}

Having made all the necessary adaptations of our algorithm and having established the relationships between situations on $(M',\omega',\Lambda')$ and the corresponding ones on $(M, (\vartheta, \Theta),$ $(\Lambda,E))$, we may conclude the following theorem.

\begin{theorem}\label{theorem-odd}
Under the assumptions and notations fixed above, the following statements hold.
\begin{enumerate}
\item
The bivector field $\Pi_0 = \Lambda'\,^\#(\sigma_0') = \Lambda^\#(\sigma_0) + \Lambda^\#(\tau_0)\wedge E$ on $(M, \Lambda, E)$ is Poisson of rank at most $2r$ on $M$ admitting as unique Casimir invariants the functions $f_0^1,\ldots,f_0^k$.
\item
The bivector field $\Pi_1 = \Lambda'\,^\#(\sigma_1') = \Lambda^\#(\sigma_1) + \Lambda^\#(\tau_1)\wedge E$ on $(M, \Lambda, E)$ is Poisson of rank at most $2r$ on $M$ admitting as unique Casimir invariants the functions $f_{r_1}^1,\ldots,f_{r_k}^k$.
\item
The $2$-forms $\sigma'_{(\lambda)} = \sigma'_1-\lambda \sigma'_0$, $\lambda \in \R\cup\{\infty\}$, are smooth sections of $\bigwedge^2 D_{(\lambda)}'^\circ$  of maximal rank on $\mathcal{U}'$ and the bivector fields
\begin{equation*}
\Pi_{(\lambda)}= \Lambda'\,^\#(\sigma'_{(\lambda)}) = \Lambda'\,^\#(\sigma_1') - \lambda \Lambda'\,^\#(\sigma_0') = \Pi_1- \lambda \Pi_0, \;\; \lambda \in \R\cup\{\infty\},
\end{equation*}
on $(M,\Lambda, E)$ define a Poisson pencil of rank at most $2r$ on $M$ admitting as unique polynomial Casimir invariants the polynomials $F^1(\lambda), \ldots, F^k(\lambda)$.
The Poisson bracket $\{\cdot, \cdot\}_{(\lambda)}$ on $\C$ associated to $\Pi_{(\lambda)}$, $\lambda \in \R\cup \{\infty\}$, is given by the formula
\begin{equation}\label{bracket-lambda-odd}
\{\cdot , \cdot\}_{(\lambda)} \Omega = -\frac{1}{F(\lambda)}d\cdot \wedge \, d\cdot \wedge\big(\sigma_{(\lambda)} + \frac{g_{(\lambda)}}{r-1}\Theta\big) \wedge \frac{\Theta^{r-2}}{(r-2)!}\wedge dF^1(\lambda)\wedge\ldots \wedge dF^k(\lambda),
\end{equation}
where $\Omega$ is the volume form on $M$ defined by $(\vartheta, \Theta)$ and $g_{(\lambda)} = i_{\Lambda}\sigma_{(\lambda)}$.
\item
The functions of the family $\mathfrak{F}=(f^1_0,\ldots,f^1_{r_1},\ldots, f^k_0,\ldots, f^k_{r_k})$ are in involution with respect to any Poisson bracket $\{\cdot, \cdot\}_{(\lambda)}$, $\lambda \in \R\cup \{\infty\}$.
\end{enumerate}
\end{theorem}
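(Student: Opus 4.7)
The plan is to reduce Theorem \ref{theorem-odd} to Theorem \ref{central-theorem} via the lift to $M' = M \times \R$ equipped with the almost symplectic form $\omega' = \Theta + ds\wedge\vartheta$ and its associated nondegenerate bivector $\Lambda' = \Lambda + \frac{\partial}{\partial s}\wedge E$. First, I would apply Theorem \ref{central-theorem} on the even-dimensional manifold $M'$ (of dimension $2r + k + 1 = 2r + 2(l+1)$) to the augmented family of $k+1$ polynomials $\bigl(F^1(\lambda),\ldots, F^k(\lambda), F^{k+1}(\lambda) = s\bigr)$ with the $2$-forms $\sigma_0'$, $\sigma_1'$. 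The hypotheses (\ref{condition-Lambda-0-1}) for the even-dimensional theorem on $M'$ are exactly the nondegeneracy computations already carried out in the excerpt, which yield $F'(\lambda) = -F(\lambda) \neq 0$ on $\mathcal{U}' = \mathcal{U}\times\R$; the algebraic conditions (\ref{sigma'}) and the recursion (\ref{recursion - sigma' - X}) are imposed precisely so that Theorem \ref{central-theorem} can be invoked.

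Next, I would show that the resulting Poisson pencil $\Pi_{(\lambda)} = \Lambda'\,^\#(\sigma'_{(\lambda)})$ on $M'$ descends to a Poisson pencil on $M$. Using the splitting $\sigma'_{(\lambda)} = \sigma_{(\lambda)} + \tau_{(\lambda)}\wedge ds$, the semi-basic character of $\sigma_{(\lambda)}$ and $\tau_{(\lambda)}$ (so that $\langle \tau_{(\lambda)}, E\rangle = 0$), and the identities $\Lambda'\,^\#(ds) = E$ and $\Lambda'\,^\#(\alpha) = \Lambda^\#(\alpha)$ for semi-basic $\alpha$, a direct expansion gives
\begin{equation*}
\Pi_{(\lambda)} = \Lambda^\#(\sigma_{(\lambda)}) + \Lambda^\#(\tau_{(\lambda)})\wedge E,
\end{equation*}
which is $s$-independent and has no $\partial/\partial s$ component, hence is a genuine bivector field on $M$. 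Because $F^{k+1}(\lambda) = s$ is by construction an ``artificial'' Casimir of $\Pi_{(\lambda)}$ arising only from the lift, the remaining polynomial Casimirs on $M$ are exactly $F^1(\lambda),\ldots, F^k(\lambda)$, which proves items (1), (2), (3) at the level of Poisson tensors and Casimir invariants.

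For the explicit formula (\ref{bracket-lambda-odd}), I would substitute into the even-dimensional formula (\ref{bracket-lambda-even}) applied on $M'$, with $\Omega' = ds\wedge\Omega$, $\omega' = \Theta + ds\wedge\vartheta$, $F'(\lambda) = -F(\lambda)$, and $dF^{k+1}(\lambda) = ds$. The trailing $ds$ in the wedge product annihilates every term of $\omega'\,^{r-2}$ and of $\omega'$ inside the parenthesis that contains a $ds$-factor, leaving only the pure $\Theta$ contributions; similarly, $i_{\Lambda'}\sigma'_{(\lambda)}$ reduces to $i_\Lambda\sigma_{(\lambda)} = g_{(\lambda)}$ because the $\tau_{(\lambda)}\wedge ds$ part pairs with the $\frac{\partial}{\partial s}\wedge E$ part of $\Lambda'$ to contribute $\langle \tau_{(\lambda)}, E\rangle = 0$. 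The $ds$ on the left combines with the trailing $ds$ on the right through $\Omega' = ds\wedge\Omega$, and the sign from $F'(\lambda) = -F(\lambda)$ is absorbed, yielding exactly (\ref{bracket-lambda-odd}). This sign-and-degree bookkeeping is the main technical obstacle; the only genuinely geometric input beyond Theorem \ref{central-theorem} is the verification that the $\tau_{(\lambda)}\wedge ds$ components contribute harmlessly.

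Finally, item (4) follows by exactly the argument used in Theorem \ref{central-theorem}: by Corollary \ref{Corollary-coefficient-Casimir}, the coefficients $f^i_0,\ldots, f^i_{r_i}$ of each polynomial Casimir $F^i(\lambda)$ form a bi-Hamiltonian hierarchy of $(M,\Pi_0,\Pi_1)$ which, by item (1), starts with a Casimir $f^i_0$ of $\Pi_0$; Proposition \ref{hierarchy 2} then gives bi-involution of the whole family $\mathfrak{F}$ with respect to both $\Pi_0$ and $\Pi_1$, and hence with respect to every bracket in the pencil.
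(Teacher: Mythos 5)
Your reduction to Theorem \ref{central-theorem} via the lift to $M' = M\times\R$ with the artificial Casimir $s$ is exactly the paper's strategy; the paper in fact gives no separate proof of Theorem \ref{theorem-odd}, presenting it as an immediate consequence of the preparatory correspondence between $(M,(\vartheta,\Theta),(\Lambda,E))$ and $(M',\omega',\Lambda')$ that you reproduce. Your proposal is correct and, in the descent of the bracket formula and the verification that the $\tau_{(\lambda)}\wedge ds$ terms contribute nothing, somewhat more explicit than the paper itself.
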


\section{Examples}\label{examples}
\subsection{From the constants of motion of the Lagrange's top}
The Lagrange's top is an axially symmetric rigid body in a three dimensional space, subject to a constant gravitational field such that the base point of the body-symmetry axis is fixed in the space.
We will give the equations of motion that describe the spinning top in terms of the orthogonal moving coordinate frame, attached to the top, whose axes are the principal inertia axes of the top and its origin coincide with the fixed point. Let $I_1$, $I_2$, $I_3$ be the principal moments of inertia about the fixed point, with $I_1I_2I_3\neq 0$ (because the body is not planar), and $m=(m_1,m_2,m_3)$ the center of gravity of the top. Since the top is symmetric, at least two of the moments of inertia are equal, say $I_1=I_2\neq I_3$, and the center of gravity $m$ lies on the symmetry axis of the body that is characterized by the facts that it passes through the fixed point and corresponds to the equal moments of inertia, thus $m=(0,0,m_3)$ with $m_3\neq 0$, i.e., we exclude the case where $m$ coincides with the fixed point (Euler-Poinsot top). Without loss of generality, by rescaling the variables, we may assume that $I_1=I_2=1$. Then the equations of motion of the top are written as
\begin{equation}\label{Lagrange - top}
\begin{array}{rcl}
\dot{x}_1 & = & l_3 y_3x_2-y_2x_3,  \\
\dot{x}_2 & = & y_1x_3-y_3x_1,  \\
\dot{x}_3 & = & y_2x_1-l_3 y_1x_2,
\end{array}
\quad \quad
\begin{array}{rcl}
\dot{y}_1 & = & m_3x_2+(l_3-1)y_2y_3,   \\
\dot{y}_2 & = & -m_3x_1-(l_3-1)y_1y_3,  \\
\dot{y}_3 & = & 0,
\end{array}
\end{equation}
where $x=(x_1,x_2,x_3)$ is the gravitational field, $y=(y_1,y_2,y_3)$ is the angular momentum, both expressed with respect to the moving coordinate system attached to the top, and $l_3 = 1/I_3$, (for more details, see \cite{adler-vanMoer-Pol}). The above system can be viewed as a vector field on $\R^6$ with coordinates $(x_1,x_2,x_3,y_1,y_2,y_3)$. In these coordinates, the momentum map of the Lagrange's top is $\mathfrak{F}=(f_1,f_2,f_3,f_4)$, where the constants of motion $f_i$ are given by
\begin{eqnarray*}
f_1 & = & x_1^2+x_2^2+x_3^2, \nonumber \\
f_2 & = & x_1y_1+x_2y_2+x_3y_3, \nonumber \\
f_3 & = & \frac{1}{2}(y_1^2 + y_2^2 + l_3y_3^2) + m_3x_3, \nonumber \\
f_4 & = & y_3.
\end{eqnarray*}
The function $f_1$ is the square of the length of the gravity vector, $f_2$ is the angular momentum component in the direction of gravity, $f_3$ is the energy of the top and $f_4$ is the component of the moment of angular momentum in the direction of the symmetry axis. Our aim is, by applying our algorithm, to build Poisson brackets with respect of which the family $\mathfrak{F}$ is involutive.

\vspace{2mm}
We consider the polynomial functions
\begin{equation*}
F^1(\lambda) = \lambda f_1 + f_3 \quad \mathrm{and} \quad F^2(\lambda) = \lambda f_2 +f_4, \quad \lambda \in \R\cup \{\infty\},
\end{equation*}
and we endow $\R^6$ with the nondegenerate bivector field $\Lambda = \sum_{i=1}^3 \displaystyle{\frac{\partial}{\partial x_i}\wedge \frac{\partial}{\partial y_i}}$ which verifies condition (\ref{condition-Lambda-0-1}) on the locus $\{(x,y) \in \R^6 \, / \, x=(x_1,x_2,x_3)\neq 0\}$. In fact, we have
\begin{equation*}
\langle df_1\wedge df_2, \Lambda \rangle = 2x_1^2+2x_2^2+2x_3^2 \neq 0 \quad \;\; \mathrm{and} \quad \;\; \langle df_3\wedge df_4, \Lambda \rangle = m_3\neq 0.
\end{equation*}
The above choice of polynomials $F^1$ and $F^2$ leads us to the search of a Poisson pencil $\Pi_{(\lambda)} = \Pi_1 - \lambda \Pi_0$, $\lambda \in \R\cup\{\infty\}$, defined by a pair $(\Pi_0,\Pi_1)$ of compatible Poisson structures on $\R^6$, whose any element is of rank $4$ almost everywhere on $\R^6$, and such that $f_1, f_2$ are the unique Casimir invariants of $\Pi_0$ and $f_3, f_4$ are the ones of $\Pi_1$. The Hamiltonian vector fields of $f_i$, $i=1,\ldots,4$, with respect to $\Lambda$ are
\begin{eqnarray*}
X_{f_1} & = & 2x_1\frac{\partial}{\partial y_1} + 2x_2\frac{\partial}{\partial y_2} + 2x_3 \frac{\partial}{\partial y_3}, \\
X_{f_2} & = & -x_1\frac{\partial}{\partial x_1} - x_2\frac{\partial}{\partial x_2} - x_3\frac{\partial}{\partial x_3} + y_1\frac{\partial}{\partial y_1} + y_2\frac{\partial}{\partial y_2}+y_3\frac{\partial}{\partial y_3}, \\
X_{f_3} & = & -y_1\frac{\partial}{\partial x_1} - y_2\frac{\partial}{\partial x_2} - l_3y_3\frac{\partial}{\partial x_3} + m_3\frac{\partial}{\partial y_3}, \\
X_{f_4} & = & -\frac{\partial}{\partial x_3}.
\end{eqnarray*}
Following our previous notation, we set $D_0 = \langle X_{f_1}, X_{f_2}\rangle$ and $D_1 = \langle X_{f_3}, X_{f_4}\rangle$. Then, the corresponding annihilators are
\begin{eqnarray*}
\lefteqn{D_0^\circ = \{\alpha_1dx_1 + \alpha_2dx_2 + \alpha_3dx_3 + \beta_1dy_1 + \beta_2dy_2 + \beta_3dy_3 \in \Omega^1(\R^6)  \, /  } \nonumber \\
& & \hspace{3mm} 2x_1\beta_1 + 2x_2\beta_2 + 2x_3\beta_3 = 0  \;\, \mathrm{and} \;\, \alpha_1x_1+\alpha_2x_2+\alpha_3x_3 -\beta_1y_1-\beta_2y_2-\beta_3y_3 =0 \}
\end{eqnarray*}
and
\begin{eqnarray*}
D_1^\circ & = & \{\alpha_1dx_1 + \alpha_2dx_2 + \alpha_3dx_3 + \beta_1dy_1 + \beta_2dy_2 + \beta_3dy_3 \in \Omega^1(\R^6) \,/  \nonumber \\
& &  \hspace{30mm} \alpha_1y_1+\alpha_2y_2+\alpha_3l_3y_3 - m_3\beta_3=0 \; \,\mathrm{and}\; \,\alpha_3 =0 \}.
\end{eqnarray*}
A basis of smooth sections of $D_0^\circ$ is the quadruple $(\zeta_1,\zeta_2,\zeta_3,\zeta_4)$ of smooth $1$-forms on $\R^6$ given by
\begin{equation*}
\zeta_1 = x_2dx_1 - x_1dx_2, \;\; \zeta_2 = x_3dx_2 - x_2dx_3, \;\;
\end{equation*}
\begin{equation*}
\zeta_3 = -y_2dx_1 + y_1dx_2+x_2dy_1 - x_1dy_2 \;\; \mathrm{and} \;\; \zeta_4 = -y_3dx_2 + y_2dx_3+x_3dy_2 - x_2dy_3.
\end{equation*}
The Pfaffian system $D_1^\circ$ is generated by the family $(\eta_1,\eta_2,\eta_3,\eta_4)$ of elements of $\Omega^1(\R^6)$, where
\begin{equation*}
\eta_1 = m_3dx_1+y_1dy_3, \;\; \eta_2 = m_3dx_2+y_2dy_3,\;\; \eta_3 = dy_1 \; \; \mathrm{and} \;\; \eta_4 = dy_2.
\end{equation*}
We consider the $2$-form
\begin{equation*}
\sigma_0 = \frac{y_2}{x_2^2}\zeta_1\wedge \zeta_2 + \frac{1}{x_2}\zeta_1\wedge \zeta_4 - \frac{1}{x_2}\zeta_2\wedge \zeta_3
\end{equation*}
on the open and dense subset $\mathcal{U}_0 = \{(x,y) \in \R^6 \, / \, x_2 \neq 0\}$ of $\R^6$. It is a smooth section of $\bigwedge^2 D_0^\circ$ of maximal rank on $\mathcal{U}_0$ and verifies the condition $\delta(\sigma_0 \wedge \sigma_0) = 2 \sigma_0 \wedge \delta(\sigma_0)$ (straightforward computation). So, its image $\Lambda^\#(\sigma_0)$ via $\Lambda^\#$ defines a linear Poisson structure $\Pi_0$ on $\mathcal{U}_0$ whose matricial expression is
\begin{equation*}
\Pi_0 = \left(
\begin{array}{cccccc}
0 & 0 & 0 & 0 & -x_3 & x_2 \\
0 & 0 & 0 & x_3 & 0 & -x_1 \\
0 & 0 & 0 & -x_2 & x_1 & 0 \\
0 & -x_3 & x_2 & 0 & -y_3& y_2 \\
x_3 & 0 & -x_1 & y_3 & 0 & -y_1 \\
-x_2 & x_1 & 0 & -y_2 & y_1 & 0
\end{array}
\right).
\end{equation*}
The center of the corresponding Lie algebra $(C^\infty(\mathcal{U}_0), \{\cdot,\cdot\}_0)$ is generated by the functions $f_1$ and $f_2$. We recognize $\Pi_0$ as the Lie-Poisson structure on the dual space of a Lie algebra isomorphic to $\mathfrak{e}(3) = \mathfrak{so}(3)\ltimes \R^3$. The vector field (\ref{Lagrange - top}) is Hamiltonian with respect to $\Pi_0$ and its Hamiltonian function is precisely the energy $f_3$ of the top. In the next step we search for a smooth section $\sigma_1 = \sum_{1\leq i <j \leq 4} k_{ij}\eta_i\wedge \eta_j$ of $\bigwedge^2 D_1^\circ$ of maximal rank on an open and dense subset $\mathcal{U}_1$ of $\R^6$ that verifies the Lenard-Magri recursion relations (\ref{recursion - sigma - X}), i.e.,
\begin{equation*}
\sigma_0 (X_{f_3}, \cdot) = \sigma_1 (X_{f_1}, \cdot) \quad \mathrm{and} \quad   \sigma_0 (X_{f_4}, \cdot) = \sigma_1 (X_{f_2}, \cdot).
\end{equation*}
After a long calculation we find that the coefficients $k_{ij}$ of such a $\sigma_1$ must be of the type
\begin{eqnarray*}
k_{12} & = & \frac{2k_{34}-l_3y_3}{2m_3x_3},  \nonumber \\
k_{13} & = & \frac{[2x_2(m_3x_2-y_2y_3)+2x_3y_2^2]k_{34}+ (m_3x_2-y_2y_3)(x_3y_2-l_3x_2y_3)-2x_2x_3y_2}{2m_3x_3(x_2y_1-x_1y_2)}, \nonumber \\
k_{14} & = & \frac{[2x_1(m_3x_2-y_2y_3)+2x_3y_1y_2]k_{34}+ (m_3x_2-y_2y_3)(x_3y_1-l_3x_1y_3)-2x_1x_3y_2}{2m_3x_3(x_1y_2-x_2y_1)}, \nonumber \\
k_{23} & = & \frac{[2x_2(m_3x_1-y_1y_3)+2x_3y_1y_2]k_{34}+ (m_3x_1-y_1y_3)(x_3y_2-l_3x_2y_3)-2x_2x_3y_1}{2m_3x_3(x_1y_2-x_2y_1)}, \nonumber \\
k_{24} & = & \frac{[2x_1(m_3x_1-y_1y_3)+2x_3y_1^2]k_{34}+ (m_3x_1-y_1y_3)(x_3y_1-l_3x_1y_3)-2x_1x_3y_1}{2m_3x_3(x_2y_1-x_1y_2)}, \nonumber \\
\end{eqnarray*}
with $k_{34}$ a some function on $\mathcal{U}_1 = \{(x,y)\in \R^6 \, / \, x_3(x_1y_2-x_2y_1) \neq 0 \}$. In the final step will must determine the function $k_{34}$ in a way such that $\sigma_1$ verifies also the conditions $\delta(\sigma_1 \wedge \sigma_1) = 2 \sigma_1 \wedge \delta(\sigma_1)$,
which is equivalent to saying that $\Pi_1 = \Lambda^\#(\sigma_1)$ is Poisson, and $\delta(\sigma_0 \wedge \sigma_1) =  \delta(\sigma_0) \wedge  \sigma_1 + \sigma_0 \wedge \delta(\sigma_1)$,
which assures us the compatibility of $\Pi_1$ with $\Pi_0$. These conditions are translated by a system of forty partial differential equations which is very difficult to solved. However, in the special case where $l_3=1 \Leftrightarrow I_3 =1$ and $m_3=2$, which means that the top is spherical and its center of gravity is the point $m=(0,0,2)$, by choosing $k_{34}=y_3/2$, we get $k_{12}=0$, $k_{13}=0$, $k_{14}=-1/2$, $k_{23}=1/2$ and $k_{24}=0$, which determine the $2$-form
\begin{equation*}
\sigma_1 = -\frac{1}{2}\eta_1\wedge \eta_4 +\frac{1}{2} \eta_2\wedge \eta_3 +\frac{y_3}{2}\eta_3\wedge\eta_4
\end{equation*}
of maximal rank on $\mathcal{U}_1$ verifying the required conditions. Therefore, the bivector field $\Pi_1=\Lambda^\#(\sigma_1)$ is Poisson, compatible with $\Pi_0$, and possesses as unique Casimir invariants the functions $f_3$ and $f_4$. Its matricial expression is
\begin{equation*}
\Pi_1 = \left(
\begin{array}{cccccc}
0 & \frac{1}{2}y_3 & -\frac{1}{2}y_2 & 0 & 1 & 0 \\
-\frac{1}{2}y_3 & 0 & \frac{1}{2}y_1 & -1 & 0 & 0 \\
\frac{1}{2}y_2 & -\frac{1}{2}y_1 & 0 & 0 & 0 & 0 \\
0 & 1 & 0 & 0 & 0 & 0 \\
-1 & 0 & 0 & 0 & 0 & 0 \\
0 & 0 & 0 & 0 & 0 & 0
\end{array}
\right).
\end{equation*}
Consequently, in this special case, $F^1$ and $F^2$ are the unique polynomial Casimir functions of the brackets $\{\cdot,\cdot\}_{(\lambda)}$ defined by the pencil
$\Pi_{(\lambda)} = \Pi_1 - \lambda \Pi_0$, $\lambda \in \R\cup\{\infty\}$, of affine Poisson structures on $\mathcal{U}=\mathcal{U}_0 \cap \mathcal{U}_1$. Thus, $\mathfrak{F}=(f_1,f_2,f_3,f_4)$, with $f_3 = \displaystyle{\frac{1}{2}}(y_1^2+y_2^2+y_3^2) + 2x_3$, is involutive with respect of any Poisson bracket $\{\cdot,\cdot\}_{(\lambda)}$, $\lambda \in \R\cup \{\infty\}$, which is given, in an alternative way, by the formula (\ref{bracket-lambda-even}). Precisely, we have $\Omega = \displaystyle{\frac{\omega^3}{3!}} = -dx_1\wedge dx_2 \wedge dx_3 \wedge dy_1 \wedge dy_2 \wedge dy_3$,
\begin{equation*}
F(\lambda) = \langle d(F^1)\wedge d(F^2), \Lambda \rangle = 2\lambda^2(x_1^2+x_2^2+x_3^2) - \lambda(y_1^2+y_2^2+y_3^2) + 2,
\end{equation*}
which is a polynomial function nonvanishing almost everywhere on $\R^6$,
\begin{eqnarray*}
\sigma_{(\lambda)} & = & \sigma_1 - \lambda \sigma_0  \nonumber \\
& = & \lambda y_3 dx_1\wedge dx_2 - \lambda y_2 dx_1\wedge dx_3 + \lambda y_1 dx_2\wedge dx_3 - (1+ \lambda x_3)dx_1\wedge dy_2 \nonumber \\
& & + \lambda x_2 dx_1\wedge dy_3 + (1+\lambda x_3)dx_2\wedge dy_1 - \lambda x_1 dx_2\wedge dy_3 - \lambda x_2 dx_3\wedge dy_1 \nonumber \\
& & + \lambda x_1 dx_3\wedge dy_2 + \frac{1}{2}y_3 dy_1\wedge dy_2 - \frac{1}{2}y_2 dy_1\wedge dy_3 + \frac{1}{2}y_1 dy_2\wedge dy_3,
\end{eqnarray*}
and $g_{(\lambda)} = i_{\Lambda} \sigma_{(\lambda)} = 0$. So,
\begin{eqnarray*}
\Phi_{(\lambda)} & = & -\frac{1}{F(\lambda)}\sigma_{(\lambda)}\wedge dF^1(\lambda) \wedge dF^2(\lambda) \nonumber \\
& = & - \lambda y_1 dx_1\wedge dx_2 \wedge dx_3 \wedge dy_1 - \lambda y_2 dx_1\wedge dx_2 \wedge dx_3 \wedge dy_2  \nonumber \\
& & - \lambda y_3 dx_1\wedge dx_2 \wedge dx_3 \wedge dy_3 - \lambda x_1 dx_1\wedge dx_2 \wedge dy_1 \wedge dy_3 \nonumber \\
& & - \lambda x_2 dx_1\wedge dx_2 \wedge dy_2 \wedge dy_3 + \lambda x_1 dx_1\wedge dx_3 \wedge dy_1 \wedge dy_2 \nonumber \\
& & - (1+\lambda x_3 )dx_1\wedge dx_3 \wedge dy_2 \wedge dy_3 - \frac{1}{2}y_1 dx_1\wedge dy_1\wedge dy_2\wedge dy_3 \nonumber \\
& & + \lambda x_2 dx_2\wedge dx_3 \wedge dy_1 \wedge dy_2 + (1+\lambda x_3 )dx_2\wedge dx_3 \wedge dy_1 \wedge dy_3  \nonumber \\
& & -\frac{1}{2}y_2 dx_2\wedge dy_1 \wedge dy_2 \wedge dy_3 - \frac{1}{2}y_3 dx_3\wedge dy_1 \wedge dy_2 \wedge dy_3
\end{eqnarray*}
and, for any $f,h\in C^\infty (\R^6)$,
\begin{equation*}
\{f,h \}_{(\lambda)}\Omega = df \wedge \, d h \wedge \Phi_{(\lambda)}.
\end{equation*}
Furthermore, we get that the vector field (\ref{Lagrange - top}) is also Hamiltonian with respect to $\Pi_1$ and its Hamiltonian is the function $f_1$. This bi-Hamiltonian formulation of (\ref{Lagrange - top}) was introduced in \cite{ratiu} on the semi-direct product $\mathfrak{so}(3)\ltimes \mathfrak{so}(3)$.
Another result is that the pencil $(\Pi_{(\lambda)})_{\lambda \in \R\cup\{\infty\}}$ possesses one more bi-Hamiltonian vector field, the vector field
\begin{equation*}
Y = x_2\frac{\partial}{\partial x_1} - x_1 \frac{\partial}{\partial x_2} + y_2\frac{\partial}{\partial y_1} - y_1 \frac{\partial}{\partial y_2},
\end{equation*}
whose Hamitlonians with respect to $\Pi_0$ and $\Pi_1$ are, respectively, the functions $f_4$ and $f_2$. By construction, $Y$ has the same constants of motion with (\ref{Lagrange - top}).

\subsection{From Lax pairs to Poisson pencils}
As it is well known and as we have noted in the introduction, any dynamical system $X$ on a smooth manifold $M$ which admits a Lax representation
\begin{equation*}
\dot{L}(t)=[L,P],
\end{equation*}
$(L,P)$ being the Lax pair of matrices, has as first integrals the coefficients of the chara\-cteristic polynomial $\det(L-\lambda I)$ of $L$. As an application of our construction of Poisson pencils with prescribed polynomial Casimir invariants, we can, starting from such a polynomial, derive Poisson pencils having $\det(L-\lambda I)$ as polynomial Casimir function and, consequently, bi-Hamiltonian systems possessing the same first integrals with $X$. If $M$ is of even dimension $2n$, in order to apply our technique, we cut
$\det(L-\lambda I)$ in an even number of parts $F^1(\lambda), \ldots, F^k(\lambda)$, $k=2l$, and construct Poisson pencils of rank $2r=2n-k$ almost everywhere on $M$
with $F^1(\lambda), \ldots, F^k(\lambda)$ as polynomial Casimir invariants. Consequently, $\det(L-\lambda I) = F^1(\lambda)+ \ldots + F^k(\lambda)$ is also a Casimir of the produced Poisson pencil.
Analogously, if $M$ is of odd dimension $2n+1$,
we cut $\det(L-\lambda I)$ in an odd number of parts $F^1(\lambda), \ldots, F^k(\lambda)$, $k=2l+1$, we add a dimension on $M$, and we consider also the polynomial $F^{k+1}(\lambda)=s$,
where $s$ is the coordinate on the added dimension. Then, we construct Poisson pencils on $M'=M\times \R$ of rank $2r= 2n +2-(k+1)=2n-2l$ almost anywhere
with polynomial Casimirs $F^1(\lambda),\ldots, F^{k+1}(\lambda)$. By reduction, we get Poisson pencils on $M$ of rank $2r$ almost anywhere with polynomial Casimir
functions $F^1(\lambda), \ldots, F^k(\lambda)$. Therefore, $\det(L-\lambda I) = F^1(\lambda)+ \ldots + F^k(\lambda)$ is such a polynomial also.

\vspace{2mm}
In the following, we use the algorithm outlined above to illustrate an explicit example of establishing Poisson pencils (of maximal rank)
with the characteristic polynomial of a Lax operator as polynomial Casimir. In order for the calculations to be reasonable, we consider the Lax formulation of a system on a manifold of low dimension.

Precisely, we deal with the system of classical, non-periodic, Toda lattice of $n=3$ particles \cite{joana-pan} that, in Flaschka's coordinate system $(a_1,a_2,b_1,b_2,b_3)$ \cite{flas}, takes the form
\begin{equation*}
\dot{a}_i = a_i(b_{i+1}-b_i),  \quad \dot{b}_i=2(a_i^2-a_{i-1}^2), \quad i=1,2,3,
\end{equation*}
with the convention $a_0 = a_3 =0$. These equations can be written in Lax form $\dot{L}=[L,P]$, where $L$ is the symmetric Jacobi matrix
\begin{equation*}
L = \left(
\begin{array}{ccc}
b_1 & a_1 & 0 \\
a_1 & b_2 & a_2 \\
0 & a_2 & b_3
\end{array}
\right)  \quad \quad \mathrm{and} \quad \quad  P = \left(
\begin{array}{ccc}
0 & -a_1 & 0 \\
a_1 & 0 & -a_2 \\
0 & a_2 & 0
\end{array}
\right).
\end{equation*}
The characteristic polynomial of $L$ is
\begin{equation*}
\det(L-\lambda I) = -\lambda^3 +(b_1+b_2+b_3)\lambda^2 +(a_1^2 +a_2^2 -b_1b_2 - b_1b_3 - b_2b_3)\lambda + b_1b_2b_3-b_1a_2^2-b_3a_1^2.
\end{equation*}
We set
\begin{eqnarray*}
f_0(a,b) & = & b_1+b_2+b_3   \nonumber \\
f_1(a,b) & = & a_1^2 +a_2^2 -b_1b_2 - b_1b_3 - b_2b_3 \nonumber \\
f_2(a,b) & = & b_1b_2b_3-b_1a_2^2-b_3a_1^2.
\end{eqnarray*}
Our aim is to build bi-Hamiltonian structures $(\Pi_0, \Pi_1)$ on $\R^5$, of maximal rank $4$, such that $f_0$ is a Casimir invariant of $\Pi_0$, $f_2$ is a Casimir of $\Pi_1$ and the Lenard-Magri recursion relations
\begin{equation*}
\Pi_0^\#(df_{k+1}) = \Pi_1^\#(df_k), \quad k=0,1,
\end{equation*}
are satisfied. Then, $\det(L-\lambda I)$ will be a polynomial Casimir of the Poisson pencil $\Pi_{(\lambda)} = \Pi_1-\lambda \Pi_0$, $\lambda \in \R\cup\{\infty\}$. Since the studied Toda system depends on $5$ variables, we add a dimension on $\R^5$ and denote its canonical coordinate by $s$. Then, we apply our procedure step by step in order to establish Poisson pencils $(\Pi_{(\lambda)})_{\lambda \in \R\cup\{\infty\}}$ on $\R^6$ admitting the polynomials $F^1(\lambda)=\det(L-\lambda I)$ and $F^2(\lambda)=s$ as Casimirs. (The function $s$ will be a common Casimir for all the Poisson structures of the pencil.) On $\R^6$, we take the canonical Poisson bivector field $\Lambda' = \displaystyle{\frac{\partial}{\partial a_1}\wedge \frac{\partial}{\partial b_1} + \frac{\partial}{\partial a_2}\wedge \frac{\partial}{\partial b_2} + \frac{\partial}{\partial s}\wedge \frac{\partial}{\partial b_3}}$ which verifies condition (\ref{condition-Lambda-0-1}), i.e.
\begin{equation*}
\langle df_0\wedge ds, \Lambda' \rangle = -1 \neq 0 \quad \mathrm{and} \quad \langle df_2\wedge ds, \Lambda' \rangle =-1\neq 0.
\end{equation*}
$\Lambda'$ can be viewed as the Poisson structure on $\R^6$ generated by the co-symplectic structure $(\vartheta, \Theta)= (db_3, da_1\wedge db_1 + da_2\wedge db_2)$ and its contravariant almost Jacobi structure $(\Lambda,E) = (\displaystyle{\frac{\partial}{\partial a_1}\wedge \frac{\partial}{\partial b_1} + \frac{\partial}{\partial a_2}\wedge \frac{\partial}{\partial b_2}, \frac{\partial}{\partial b_3}})$ on $\R^5$. The Hamiltonian vector fields of $f_i$, $i=0,1,2$, and $s$ with respect to $\Lambda'$ are:
\begin{equation*}
X'_{f_0}= -\frac{\partial}{\partial a_1} - \frac{\partial}{\partial a_2} - \frac{\partial}{\partial s}, \;\; X'_{f_1} = (b_2+b_3)\frac{\partial}{\partial a_1} + (b_1+b_3)\frac{\partial}{\partial a_2}+2a_1\frac{\partial}{\partial b_1} + 2a_2\frac{\partial}{\partial b_2} + (b_1+b_2)\frac{\partial}{\partial s},
\end{equation*}
\begin{equation*}
X'_{f_2} = (a_2^2-b_2b_3)\frac{\partial}{\partial a_1} - b_1b_3\frac{\partial}{\partial a_2} -2a_1b_3\frac{\partial}{\partial b_1} - 2a_2b_1\frac{\partial}{\partial b_2} + (a_1^2-b_1b_2)\frac{\partial}{\partial s} \quad \mathrm{and} \quad X'_s = \frac{\partial}{\partial b_3}.
\end{equation*}
So, $D'_0 = \langle X'_{f_0}, X'_s\rangle$ and $D'_1=\langle X'_{f_1},X'_s\rangle$, and their annihilators are, respectively, the subbundles of rank $4$ of $T^\ast\R^6$,
\begin{equation*}
D_0'^\circ = \{\alpha_1da_1 + \alpha_2da_2 + \beta_1db_1+\beta_2db_2+\beta_3db_3 +\gamma ds \in \Omega^1(\R^6) /\, \alpha_1+\alpha_2+\gamma =0 \;\mathrm{and}\; \beta_3=0\}
\end{equation*}
and
\begin{eqnarray*}
\lefteqn{D_1'^\circ = \{\alpha_1da_1 + \alpha_2da_2 + \beta_1db_1+\beta_2db_2+\beta_3db_3 +\gamma ds \in \Omega^1(\R^6) /  } \nonumber \\
& & \hspace{8mm} (a_2^2-b_2b_3)\alpha_1-b_1b_3 \alpha_2 +(a_1^2-b_1b_2)\gamma -2a_1b_3\beta_1-2a_2b_1\beta_2=0 \;\mathrm{and}\; \beta_3=0\}.
\end{eqnarray*}
A basis of smooth sections of $D_0'^\circ$ is the collection $(\zeta_1, \zeta_2,\zeta_3,\zeta_4)$ of smooth $1$-forms on $\R^6$, written as
\begin{equation*}
\zeta_1 = da_1 - da_2, \quad \zeta_2 = da_2-ds, \quad \zeta_3 = db_1 \quad \mathrm{and} \quad \zeta_4 = db_2,
\end{equation*}
while a basis of smooth sections of $D_1'^\circ$ is the family $(\eta_1, \eta_2, \eta_3, \eta_4)$ of elements of $\Omega^1(\R^6)$ given by the formul{\ae}
\begin{equation*}
\eta_1 = -2b_2da_2+a_1db_1 +2b_3ds, \quad \eta_2=2b_1da_1-2b_2da_2 +a_2db_2,
\end{equation*}
\begin{equation*}
\eta_3 = 2a_1da_2-b_1db_1 \quad \mathrm{and} \quad \eta_4=2a_2da_2-b_3db_2.
\end{equation*}

\vspace{2mm}
\noindent
\emph{First selection:} We choose on the open and dense subset $\mathcal{U}'=\{(a_1,a_2,b_1,b_2,b_3,s)\in \R^6\, / \, b_1b_3 \neq 0\}$ of $(\R^6, \Lambda')$ the $2$-forms
\begin{equation*}
\sigma_0' = -a_1\zeta_1\wedge \zeta_3 - a_2 \zeta_2\wedge \zeta_4  \quad \mathrm{and} \quad \sigma_1' = -\frac{a_2}{2b_3}\eta_1\wedge \eta_4 + \frac{a_1}{2b_1}\eta_2\wedge \eta_3 -\frac{a_1a_2}{2b_1b_3}\eta_3\wedge \eta_4,
\end{equation*}
which are smooth sections of maximal rank on $\mathcal{U}'$ of $\bigwedge^2 D_0'^\circ$ and $\bigwedge^2 D_1'^\circ$, respectively. By a long, but straightforward, calculation, we confirm that the pair $(\sigma_0', \sigma_1')$ verifies conditions (\ref{sigma}) and (\ref{recursion - sigma - X}). Hence, according to Theorem \ref{central-theorem}, the image by $\Lambda'\,^\#$ of the $1$-parameter family $\sigma'_{(\lambda)}= \sigma_1' - \lambda \sigma_0'$, $\lambda \in \R\cup\{\infty\}$, of $2$-forms of maximal rank on $\mathcal{U}'$ defines a Poisson pencil $(\Pi'_{(\lambda)})_{\lambda \in \R\cup\{\infty\}}$ on $\mathcal{U}'$ with the required properties. Since $s$ is a Casimir of this pencil, whose the elements are independent of $s$ and without a term of type $\cdot \wedge \frac{\partial}{\partial s}$, it can be viewed as a Poisson pencil on the open and dense subset $\mathcal{U}$ of $\R^5$ which is the projection of $\mathcal{U}'$ on the hyperplane $s=0$ of $\R^6$. The reduced from $(\Pi'_0,\Pi'_1)$ bi-Hamiltonian structure $(\Pi_0,\Pi_1)$ on $\mathcal{U}$ is the well known pair of a linear and a quadratic Poisson tensors with respect to which the classical, non-periodic, Toda lattice of $3$ particles is bi-Hamiltonian:
\begin{equation*}
\Pi_0 \:=\: \left(
\begin{array}{ccccc}
0 & 0 & -a_1 &a_1 & 0 \\
0 & 0 & 0 &-a_2 & a_2 \\
a_1 & 0 & 0 & 0 & 0\\
-a_1 & a_2 & 0 & 0 & 0\\
0& -a_2 & 0 &0 & 0
\end{array}
\right),
\end{equation*}
\begin{equation*}
\Pi_1 \:=\: \left(
\begin{array}{ccccc}
0 & \frac{1}{2}a_1a_2 & -a_1b_1 &a_1b_2 & 0 \\
-\frac{1}{2}a_1a_2 & 0 & 0 &-a_2b_2 & a_2b_3 \\
a_1b_1 & 0 & 0 & 2a_1^2 & 0\\
-a_1b_2 & a_2b_2 & -2a_1^2 & 0 & 2a_2^2 \\
0& -a_2b_3 & 0 &-2a_2^2 & 0
\end{array}
\right).
\end{equation*}
Under Flaschka's transformation, the linear structure $\Pi_0$ is the image of the standard symplectic bracket on $\R^6$ and the quadratic one was founded by Adler in \cite{adler}. The Hamiltonian vector fields generated by $(\Pi_0,f_1)$ and $(\Pi_1,f_0)$ are equal and coincide with the Toda system, whence we get that the Toda system is bi-Hamiltonian. The Poisson pencil $(\Pi_{(\lambda)})_{\lambda \in \R\cup \{\infty\}}$, $\Pi_{(\lambda)} = \Pi_1 - \lambda \Pi_0$, possesses also the bi-Hmailtonian vector field $\Pi_0^\#(df_2)=\Pi_1^\#(df_1)$. The function $f_0$ is the only Casimir function of $\Pi_0$, $f_2$ is the only Casimir of $\Pi_1$, and $\det(L-\lambda I)+\lambda^3$ is the only Casimir of the pencil $(\Pi_{(\lambda)})_{\lambda \in \R\cup \{\infty\}}$. Hence, $\mathfrak{F}=(f_0,f_1,f_2)$ is involutive with respect to any Poisson bracket $\{\cdot,\cdot\}_{(\lambda)}$, $\lambda \in \R\cup \{\infty\}$. The latter is determined, in an equivalent way with the original, by formula (\ref{bracket-lambda-odd}). We have $\Omega = \vartheta\wedge \Theta^2 = - da_1\wedge da_2\wedge db_1\wedge db_2 \wedge db_3$, and
\begin{equation*}
F(\lambda) = \langle dF^1(\lambda), \, E  \rangle = \langle df_0 \lambda^2 + df_1\lambda +df_2, \frac{\partial}{\partial b_3} \rangle = \lambda^2 - (b_1+b_2)\lambda + b_1b_2-a_1^2,
\end{equation*}
which is a polynomial function, nonvanishing almost everywhere on $\R^5$. The semi-basic part $\sigma_{(\lambda)}=\sigma_1 - \lambda \sigma_0$ of $\sigma'_{(\lambda)}=\sigma'_1 - \lambda \sigma'_0$ is
\begin{eqnarray*}
\sigma_{(\lambda)} & = & \sigma_1 - \lambda \sigma_0 \nonumber \\
& =& 2a_1^2da_1\wedge da_2 +(a_1\lambda - a_1b_1)da_1\wedge db_1 + (a_1b_2-a_1\lambda)da_2\wedge db_1 \nonumber \\
& & +\,(a_2\lambda -a_2b_2)da_2\wedge db_2 + \frac{1}{2}a_1a_2db_1\wedge db_2
\end{eqnarray*}
and $g_{(\lambda)} = i_{\Lambda}\sigma_{(\lambda)} = a_1b_1+a_2b_2-(a_1+a_2)\lambda$. By replacing the above expressions in (\ref{bracket-lambda-odd}), we get
\begin{eqnarray*}
\Phi_{(\lambda)} & = & -\frac{1}{F(\lambda)}(\sigma_{(\lambda)} + g_{(\lambda)}\Theta)\wedge dF^1(\lambda) \nonumber \\
& = & -\,2a_2^2da_1\wedge da_2\wedge db_1 -2a_1^2 da_1\wedge da_2\wedge db_3 +(\lambda a_2-a_2b_3)da_1\wedge db_1\wedge db_2 \nonumber \\
& & +\,(\lambda a_2-a_2b_2)da_1\wedge db_1\wedge db_3 +(\lambda a_1 - a_1b_2)da_2\wedge db_1\wedge db_3  \nonumber \\
& & +\,(\lambda a_1 -a_1b_1)da_2\wedge db_2\wedge db_3 -\frac{1}{2}a_1a_2db_1\wedge db_2\wedge db_3
\end{eqnarray*}
and, for any $f,h \in C^\infty(\R^5)$,
\begin{equation*}
\{f,h\}_{(\lambda)}\Omega = df\wedge dh\wedge \Phi_{(\lambda)}.
\end{equation*}

\vspace{2mm}
\noindent
\emph{Second selection:} We choose another pair $(\sigma_0',\sigma_1')$ of $2$-forms on $(\R^6, \Lambda')$ verifying the assumptions of Theorem \ref{central-theorem}. Let $\sigma_0'$ and $\sigma_1'$ be the sections of maximal rank of $\bigwedge^2 D_0'^\circ$ and $\bigwedge^2 D_1'^\circ$, respectively, given by the relations
\begin{equation*}
\sigma_0' = 2a_1b_1\zeta_1\wedge \zeta_3 +2a_2b_3\zeta_2\wedge\zeta_4 - a_1a_2\zeta_3\wedge \zeta_4 \quad \mathrm{and} \quad \sigma_1'= a_2\eta_1\wedge \eta_4 - a_1\eta_2\wedge \eta_3.
\end{equation*}
After a very long, but straightforward, computation, we show that $(\sigma_0', \sigma_1')$ satisfies conditions (\ref{sigma}) and (\ref{recursion - sigma - X}). Thus, the image via $\Lambda'^\#$ of the $1$-parameter family $\sigma_{(\lambda)}'=\sigma_1'-\lambda\sigma_0'$, $\lambda \in \R\cup\{\infty\}$, of $2$-forms of rank $4$ on $\R^6$ defines another Poisson pencil $(P'_{(\lambda)})_{\lambda \in \R\cup\{\infty\}}$ on $\R^6$ with the required properties. As in the previous example and for the same reasons, the induced Poisson pencil $(P_{(\lambda)})_{\lambda \in \R\cup\{\infty\}}$ from $(P'_{(\lambda)})_{\lambda \in \R\cup\{\infty\}}$ on the submanifold $\R^5$ of $\R^6$ defined by the equation $s=0$, is a Poisson pencil of maximal rank having as Casimir invariant the polynomial $\det(L-\lambda I)$. The components of the pair $(P_0,P_1)$ of bivector fields on $\R^5$ induced by $(P_0',P_1')$, $P_0'=\Lambda'^\#(\sigma_0')$ and $P_1'=\Lambda'^\#(\sigma_1')$, have, respectively, the matricial expressions
\begin{equation*}
P_0 \:=\: \left(
\begin{array}{ccccc}
0 & -a_1a_2 & 2a_1b_1 &-2a_1b_1 & 0 \\
a_1a_2 & 0 & 0 &2a_2b_3 & -2a_2b_3 \\
-2a_1b_1 & 0 & 0 & 0 & 0\\
2a_1b_1 & -2a_2b_3 & 0 & 0 & 0\\
0& 2a_2b_3 & 0 &0 & 0
\end{array}
\right)
\end{equation*}
and
\begin{equation*}
P_1 \:=\: \left(
\begin{array}{ccccc}
0 & -a_1a_2(b_1+b_3) & 2a_1b_1^2 &-2a_1(a_2^2+b_1b_2) & 0 \\
a_1a_2(b_1+b_3) & 0 & 0 &2a_2(a_1^2+b_2b_3) & -2a_2b_3^2 \\
-2a_1b_1^2 & 0 & 0 & -4a_1^2b_1 & 0\\
2a_1(a_2^2+b_1b_2)& -2a_2(a_1^2+b_2b_3) & 4a_1^2b_1 & 0 & -4a_2^2b_3\\
0& 2a_2b_3^2 & 0 &4a_2^2b_3 & 0
\end{array}
\right).
\end{equation*}
By construction, $\mathfrak{F}$ is also involutive with respect to each Poisson bracket $\{\cdot,\cdot\}_{(\lambda)}$ on $C^\infty(\R^5)$ defined by $P_{(\lambda)}$, $\lambda \in \R\cup \{\infty\}$, which can be expressed by (\ref{bracket-lambda-odd}), and the bi-Hamiltonian vector fields $P_0^\#(df_1) = P_1^\#(df_0)$ and $P_0^\#(df_2) = P_1^\#(df_1)$ on $\R^5$ have the same first integrals with the Toda lattice.

It is interesting to note that $\Pi_0$ and $P_0$ are compatible Poisson tensors having the same Casimir function, the function $f_0$, $\Pi_1$ is compatible with $P_1$ and they also have the same Casimir invariant, the function $f_2$, while the pairs $(\Pi_0,P_1)$ and $(\Pi_1,P_0)$ are not compatible.


\begin{thebibliography}{45}

\bibitem{adler}{Adler, M., On a trace functional for formal pseudo-differential operators and the symplectic structure of the Korteweg-de-Vries type equations, \textit{Invent. Math.}, 1979, vol.\, 50, pp.\,219--248.}

\bibitem{adler-vanMoer-Pol}{Adler, M., van Moerbeke, P., and Vanhaecke, P., \textit{Algebraic integrability, Painlev\'e geometry and Lie algebras}, Ergeb. Math. Grenzgeb. (3), vol.\,47, Berlin: Springer-Verlag, 2004.}

\bibitem{arn}{Arnold, V.\,I., \textit{Mathematical methods of classical mechanics}, 2nd ed., New York: Springer, 1989.}

\bibitem{casati}{Casati, P., Magri, F., and Pedroni, M., The bi-Hamiltonian approach to integrable systems, in \textit{Modern group analysis: advanced analytical and computational
methods in mathematical physics (Acireale, 1992)}, Dordrecht: Kluwer Acad. Publ., 1993, pp.\,101--110.}

\bibitem{Dam89}{Damianou, P.\,A., Nonlinear Poisson Brackets, \textit{Ph.D. Dissertation}, University of Arizona, 1989.}

\bibitem{Dam96}{Damianou, P.\,A., Transverse Poisson structures of coadjoint orbits, \textit{Bull. Sci. Math.}, 1996, vol.\,120, pp.\,195--214.}

\bibitem{dam-pet}{Damianou, P.\,A. and Petalidou, F., Poisson brackets with prescribed Casimirs, \textit{Canad. J. Math.}, 2012, vol.\, 64, pp.\,991-1018.}

\bibitem{Dam07}{Damianou, P.\,A., Sabourin, H. and Vanhaecke, P., Transverse
Poisson structures to adjoint orbits in semi-simple Lie algebras, \textit{Pacific J. Math.}, 2007, vol.\,232, pp.\,111--139.}

\bibitem{duf-zung}{Dufour J.\,P. and Zung, N.\,T., \textit{Poisson structures and their normal forms}, Progress in Mathematics 242, Basel: Birkhauser Verlag, 2005.}

\bibitem{flas}{Flaschka, H., The Toda lattice. II. Existence of integrals, \textit{Phys. Rev. B}, 1974, vol.\,9, pp.\,1924--1925.}

\bibitem{GZ-Toda}{Gel'fand, I.\,M. and Zakharevich, I., Webs, Lenard schemes, and the local geometry of bi-Hamiltonian Toda and Lax structures, \textit{Selecta Math. (N.S.)}, 2000, vol.\,6, no.\,2, pp.\,131–-183.}

\bibitem{Grab93}{Grabowski, J., Marmo, G. and Perelomov, A.\,M., Poisson structures: Towards a classification, \textit{Modern Phys. Lett. A}, 1993, vol.\,8, pp.\,1719--1733.}

\bibitem{jac}{Jacobi, C. G. J., \textit{Vorlesungen\"{u}ber Dynamik}, gehalten an der Universit\"{a}t zu K\"{o}nigsberg im Wintersemester 1842-1843 und nach einem von C.W. Borchardt ausgearbeiteten Hefte, herausgegeben von A. Clebsch, zweite revidirte Ausgabe (1884), New York: Chelsea, 1969.}

\bibitem{kir}{Kirillov, A.\,A., Local Lie algebras, \textit{Russian Math. Surveys}, 1976, vol.\,31, pp.\,55--75.}

\bibitem{yks-Poisson}{Kosmann-Schwarzbach, Y. (\'ed.), \textit{Sim\'eon-Denis Poisson. Les math\'ematiques au service de la science.} Hist. Math. Sci. Phys., Palaiseau: Ed. Éc. Polytech., 2013.}

\bibitem{CaAnPo}{Laurent-Gengoux, C., Pichereau, A., and Vanhaecke, P., \textit{Poisson structures}, Grundlehren der Mathematischen Wissenschaften, vol.\,347, Heidelberg: Springer, 2013.}

\bibitem{lib1}{Libermann, P., Sur les automorphismes infinit\'esimaux des structures symplectiques et des structures de contact, in \textit{Colloque G\'eom. Diff. Globale (Bruxelles, 1958)}, Centre Belge Rech. Math. Louvain, 1959, pp.\,37--59.}

\bibitem{lib-marle}{Libermann, P. and Marle, Ch.\,M., \textit{Symplectic geometry and analytical mechanics, Mathematics and its Applications}, vol.\,35, Dordrecht: D. Reidel Publishing Co., 1987.}

\bibitem{lch1}{Lichn\'erowicz, A., Les vari\'et\'es de Poisson et leurs alg\`ebres de Lie associ\'ees, \textit{J. Differential Geom.}, 1977, vol.\,12, pp.\,253--300.}

\bibitem{lch2}{Lichn\'erowicz, A., Les vari\'et\'es de Jacobi et leurs alg\`ebres de Lie associ\'ees, \textit{J. Math. Pures et Appl.}, 1978, vol.\,57, pp.\,453--488.}

\bibitem{lie}{Lie, S., \textit{Theorie der transformationsgruppen}, Leipzig: Teubner, 1890.}

\bibitem{liou}{Liouville, J., Note sur l'int\'egration des \'equations diff\'erentielles de la Dynamique, \textit{J. Math. Pures Appl.}, 1855, vol.\,20, pp.\,137--138.}

\bibitem{Lli-Ram}{Llibre, J. and Ram\'{\i}rez, R., \textit{Inverse problems in ordinary differential equations and applications}, Progress in Mathematics vol.\,313, Birkh\"{a}user 2016.}

\bibitem{magri}{Magri, F., Eight lectures on integrable systems (Written in collaboration with Casati, P., Falqui, G., and Pedroni, M.), in \textit{Integrability of nonlinear systems (Pondicherry, 1996)}, Lecture Notes in Phys., vol.\,495, Berlin: Springer, 1997, pp.\,256–-296.}

\bibitem{jm-mec}{Marsden, J.\,E., \textit{Lectures on Mechanics}, London Mathematical Society Lecture Note Series, vol.\,174, Cambridge: Cambridge University Press, 1992.}

\bibitem{joana-pan}{Nunes da Costa, J.\,M. and Damianou, P.\,A., Toda systems and exponents of simple Lie groups, \textit{Bull. Sci. Math.}, 2001, vol.\,125, no.\,1, pp.\,49--69.}

\bibitem{or}{Odesskii, A.\,V. and Rubtsov, V.\,N., Polynomial Poisson algebras with regular structure of symplectic leaves, \textit{Theoret. Math. Phys.}, 2002, vol.\,133, pp.\,1321--1337.}

\bibitem{per}{Perelomov, A.\,M., \textit{Integrable Systems of Classical Mechanics and Lie Algebras}, vol.\,I, Basel: Birkh\"auser Verlag, 1990.}

\bibitem{poi}{Poisson, S.\,D., Sur la variation des constantes arbitraires dans les questions de M\'ecanique, \textit{Journal de l'\'Ecole Polytechnique}, tome VIII, quinzi\`eme cahier, pp.\,266--344.}

\bibitem{ratiu}{Ratiu, T., Euler-Poisson equations on Lie algebras and the $N$-dimensional heavy rigid body, \textit{Amer. J. Math.}, 1982, vol.\,104, pp.\,409–-448.}

\bibitem{vai-b}{Vaisman, I., \textit{Lectures on the geometry of Poisson manifolds}, Progress in Mathematics vol.\,118, Basel: Birkhauser Verlag, 1994.}

\bibitem{wei}{Weinstein, A., The local structure of Poisson manifolds, \textit{J. Differential Geom.}, 1983, vol.\,18, pp.\,523--557.}

\end{thebibliography}
\end{document}